\newtheorem{theorem}{Theorem}
\newtheorem{lemma}{Lemma}
\newtheorem{corollary}{Corollary}
\newtheorem{conjecture}{Conjecture}
\newtheorem{claim}{Claim}
\newcommand{\pind}{{\rm pind}}
\newcommand{\per}{{\rm per}}
\newenvironment{proof}{
\par
\noindent {\bf Proof.}\rm}{\mbox{}\hfill\rule{0.5em}{0.809em}\par}
\begin{document}
\title{Permanent index of matrices associated with graphs}
\author{
Tsai-Lien Wong \thanks{Department of Applied Mathematics, National
Sun Yat-sen University, Kaohsiung, Taiwan 80424. Grant numbers: MOST
104-2115-M-110 -001 -MY2. Email: tlwong@math.nsysu.edu.tw} \and
Xuding Zhu
\thanks{Department of Mathematics, Zhejiang Normal University,
China. Grant Numbers: NSF11171310 and ZJNSF  Z6110786. Email:
xudingzhu@gmail.com. }
        \\[0.2cm]
       }

\date{}

\maketitle

\begin{abstract}
A total weighting of a graph $G$ is a mapping $f$ which assigns to
each element $z \in V(G) \cup E(G)$ a real number  $f(z)$ as its
weight. The vertex sum of $v$ with respect to $f$ is
$\phi_f(v)=\sum_{e \in E(v)}f(e)+f(v)$. A total weighting is proper
if $\phi_f(u) \ne \phi_f(v)$ for any edge $uv$ of $G$. A
$(k,k')$-list assignment is a mapping $L$ which assigns to each
vertex $v$ a set $L(v)$ of $k$ permissible weights, and assigns to
each edge $e$ a set $L(e)$ of $k'$ permissible weights. We say $G$
is $(k,k')$-choosable if for any $(k,k')$-list assignment $L$,
there is a proper total weighting $f$ of $G$ with $f(z) \in L(z)$
for each $z \in V(G) \cup E(G)$. It was conjectured in  [T. Wong and
X. Zhu, Total weight choosability of graphs, J. Graph Theory 66
(2011), 198-212] that every graph is $(2,2)$-choosable and every
graph with no isolated edge is $(1,3)$-choosable.   A promising tool
in the study of these conjectures is Combinatorial Nullstellensatz.
This approach leads to conjectures on the permanent indices  of
matrices
 $A_G$ and $B_G$ associated to a graph $G$.   In this
paper, we establish a method that reduces the study of  permanent of
matrices associated to a graph $G$ to the study of permanent of
matrices associated to induced subgraphs of $G$.  Using this
reduction method, we show that if $G$ is a  subcubic graph, or a
$2$-tree, or a  Halin graph, or a grid, then  $A_G$ has permanent
index $1$. As a consequence,  these graphs are
$(2,2)$-choosable.
\end{abstract}
{\small \noindent{{\bf Key words: }  Permanent index, matrix, total
weighting}

\section{Introduction}

A {\em total weighting}  of a graph $G$ is a mapping $f$ which assigns to each element
$z \in V(G) \cup E(G)$ a real number  $f(z)$ as its weight.
Given a total weighting $f$ of $G$, for a vertex $v$ of $G$,  the {\em vertex sum} of $v$ with respect to $f$ is defined as
$\phi_f(v)=\sum_{e \in E(v)}f(e)+f(v)$. A total weighting is proper if $\phi_f$ is a proper colouring of $G$,
i.e., for any edge $uv$ of $G$, $\phi_f(u) \ne \phi_f(v)$.
A total weighting $\phi$ with $\phi(v)=0$ for all vertices $v$ is also called
an {\em edge weighting}.  A  proper edge weighting $\phi$ with $\phi(e) \in \{1,2,\ldots, k\}$ for all edges $e$ is called a  {\em vertex colouring  $k$-edge weighting} of $G$.
Karonski, {\L}uczak and Thomason \cite{KLT2004} first studied edge weighting of graphs.
They conjectured that  every
graph with no isolated edges has a vertex colouring $3$-edge weighting.
This conjecture received considerable
attention, and is called the 1-2-3 conjecture.   Addario-Berry, Dalal, McDiarmid, Reed and
Thomason \cite{Add2007}  proved that every graph with no isolated edges has a vertex colouring $k$-edge weighting for $k=30$. The bound $k$  was   improved to $k
= 16$ by Addario-Berry, Dalal and Reed in \cite{Add2005} and to $k =
13$ by Wang and Yu in \cite{yu}, and to $k=5$ by  Kalkowski
\cite{K}.

Total weighting of graphs was first studied by Przyby{\l}o and  Wo\'{z}niak in \cite{PW2010}, where they defined
$\tau(G)$ to be the least integer $k$ such that $G$ has
  a proper total weighting $\phi$ with $\phi(z) \in \{1,2, \ldots, k\}$ for $z \in V(G) \cup E(G)$.
  They proved that $\tau(G) \le 11$ for all graphs $G$, and  conjectured  that $\tau(G)=2$ for all graphs $G$.
  This conjecture is called the 1-2 conjecture.
A breakthrough on 1-2 conjecture was obtained by Kalkowski,
Karo\'{n}ski and   Pfender  in \cite{KKP10}, where it was proved
that every graph $G$ has a proper total weighting $\phi$ with
$\phi(v) \in \{1,2\}$ for $v \in V(G)$ and $\phi(e) \in \{1,2,3\}$
for $e \in E(G)$.

The list version of edge weighting  of graphs was introduced by
Bartnicki,   Grytczuk and   Niwczyk in \cite{BGN09}, and the list
version of total weighting of graphs was introduced independently by
Wong and Zhu in \cite{WZ11} and by Przyby{\l}o and Wo\'{z}niak
\cite{PW2011}. Suppose $\psi: V(G) \cup E(G) \to \{1,2,\ldots,\}$ is
a mapping which assigns to each vertex and each edge of $G$ a
positive integer. A $\psi$-list assignment of $G$ is a mapping $L$
which assigns to $z \in V(G) \cup E(G)$ a set $L(z)$ of $\psi(z)$
real numbers. Given a total list assignment $L$, a proper $L$-total weighting
is a proper total weighting $\phi$ with $\phi(z) \in L(z)$ for all $z \in V(G) \cup E(G)$.
We say
$G$ is {\em total weight $\psi$-choosable} if for any $\psi$-list
assignment $L$, there is a proper $L$-total weighting  of $G$. We say $G$ is
$(k,k')$-choosable if $G$ is $\psi$-total weight choosable, where
$\psi(v)=k$ for $v \in V(G)$ and $\psi(e) = k'$ for $e \in E(G)$.

As strengthening of the 1-2-3 conjecture and the 1-2 conjecture, it
was conjectured in \cite{WZ11} that  every graph with no isolated
edges is $(1,3)$-choosable and every graph is $(2,2)$-choosable.
Thes two conjectures received a lot of attention and are verified for some special classes of graphs.
In particular, it was shown in \cite{ZW2014} that every graph is $(2,3)$-choosable.
A promising tool in the study of these conjectures is
Combinatorial Nullstellensatz.  For each $z  \in V(G) \cup E(G)$, let $x_z$
be a variable associated to $z$. Fix an  orientation $D$ of $G$.
Consider the
polynomial
$$P_G(\{x_z: z \in V(G) \cup E(G)\}) = \prod_{e=uv \in E(D)}\left(  \left(\sum_{e \in E(v)} x_e+ x_v\right) - \left(\sum_{e \in E(u)} x_e+ x_u\right)\right).$$
Assign a real number $\phi(z)$ to the variable $x_z$, and view
$\phi(z)$ as the weight of $z$.
Let $P_G( \phi  )$ be the evaluation of the
polynomial at $x_z = \phi(z)$. Then $\phi$ is a proper total
weighting of $G$ if and only if $P_G( \phi) \ne 0$.
Note that $P_G$ has degree $|E(G)|$.

An {\em index function} of $G$ is a mapping $\eta$ which
assigns to each vertex or edge $z$ of $G$ a non-negative integer $\eta(z)$ and
an index function $\eta$ is {\em valid} if
$\sum_{z \in V(G) \cup E(G)}\eta(z)=|E(G)|$.
For a valid  index function $\eta$, let
$c_{\eta}$ be the coefficient of the monomial
 $\prod_{z \in V \cup E} x_{z}^{\eta(z)}$ in the expansion of $P_G$.
It follows from  Combinatorial Nullstellensatz
\cite{nullstellensatz,AlonTarsi} that if $c_{\eta} \ne 0$, and
$L$ is a list assignment which assigns to each $z \in V(G) \cup E(G)$ a
set $L(z)$ of  $ \eta(z)+1$ real numbers, then  there exists a
mapping $\phi$ with  $\phi(z) \in L(z)$   such that $$P_G(\phi) \ne
0.$$
So to prove a graph $G$ is $(k,k')$-choosable, it suffices to show that
there is a valid index function $\eta$ with $\eta(v) \le k-1$ for $v \in V(G)$, $\eta(e) \le k'-1$ for $e \in E(G)$  and $c_{\eta} \ne 0$.

We write the polynomial $P_G(\{x_z: z \in V(G) \cup E(G)\})$  as
$$P_G(\{x_z: z \in V(G) \cup E(G)\}) = \prod_{e \in E(D)}\sum_{z \in V(G) \cup E(G)}A_G[e,z]x_z.$$
It is straightforward to verify that for $e \in E(G)$ and $z \in V(G) \cup E(G)$,
if $e=(u,v)$ (oriented from $u$ to $v$), then
\begin{equation*}
A_G[e,z]=
\begin{cases} 1 & \text{if $z=v$, or $z \ne e$ is an edge incident to $v$,}
\\
-1 & \text{if $z=u$, or $z \ne e$ is an edge incident to $u$,}
\\
0 &\text{otherwise.}
\end{cases}
\end{equation*}
Now $A_G $ is a matrix, whose rows  are indexed by the edges of $G$ and the columns are indexed by edges and vertices of $G$. Let $B_G$ be the submatrix of $A_G$ consisting of those columns of $A_G$ indexed by edges. It turns out that $(k,k')$-choosability of a graph $G$ is related to the permanent indices of $A_G$ and $B_G$.

For  an $m \times m$ matrix $A$ (whose entries are reals), the {\em
permanent} of $A$  is defined as
$${\rm per}(A)=\sum_{\sigma \in S_m} \prod_{i=1}^mA[i, \sigma(i)]$$
where $S_m$ is the symmetric group of order $m$, i.e., the summation
is taken over all the permutations $\sigma$ over $\{1,2,\ldots,
m\}$.
The {\em  permanent index}
of a matrix $A$, denoted by $\pind(A)$,  is the minimum integer $k$ such that there is a matrix $A'$ such that $\per(A') \ne 0$, each column of $A'$ is a column of $A$ and each column of $A$ occurs in $A'$ at most $k$ times (if such an integer $k$ does not exist, then $\pind(A)=\infty$).

Consider the matrix $A_G$ defined above.
Given a vertex or edge $z$ of $G$, let $A_G(z)$ be the column of
$A_G$ indexed by $z$. For an index function $\eta$ of $G$, let
$A_G(\eta)$ be the
 matrix, each of
its column is a column of $A_G$, and each column $A_G(z)$ of $A_G$
occurs $\eta(z)$ times as  a column of $A_G(\eta)$. It is known
  \cite{alontarsi1989, WZ11}
and easy to verify that for a valid index function $\eta$ of $G$,
$c_{\eta} \ne 0$ if and only if $\per(A_G(\eta)) \ne 0$. Thus  if
$\pind(A_G) =1$, then  $G$ is $(2,2)$-choosable;
 if $\pind(B_G) \le 2$,
 then $G$ is $(1,3)$-choosable. The
following two conjectures are proposed in \cite{WZ11}:

\begin{conjecture} \cite{BGN09}
\label{g0} For any graph $G$ with no isolated edges,   $\pind(B_G) \le
2$.
\end{conjecture}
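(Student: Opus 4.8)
The plan is to translate the conjecture into the language of valid index functions and then attack it by induction via the reduction method announced in the abstract. By the correspondence recorded above, $\pind(B_G) \le 2$ holds precisely when there is a valid index function $\eta$ with $\eta(v)=0$ for every vertex $v$, $\eta(e) \le 2$ for every edge $e$, and $\per(B_G(\eta)) \ne 0$; producing such an $\eta$ for every graph with no isolated edge is exactly what certifies $(1,3)$-choosability. So the whole task reduces to exhibiting, for each such $G$, a multiset of edge-columns of $B_G$ in which every edge appears at most twice, whose total size equals $|E(G)|$, and whose permanent is nonzero. Note that the all-ones index function $\eta \equiv 1$ already gives a valid matrix $B_G(\eta)=B_G$; the extra freedom of raising some $\eta(e)$ to $2$ (and compensating by setting others to $0$) is what must be used to repair the sign cancellations that can force $\per(B_G)=0$.

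First I would fix an orientation $D$ and induct on $|V(G)|$. The engine is the reduction method: one isolates a small vertex set $S$, forms the induced subgraph $H = G - S$, and seeks to show that any \emph{good} index function on $B_H$ (edge-values at most $2$, nonzero permanent) extends to a good index function on $B_G$. Concretely, one distributes index weight on the edges incident to $S$ so that the columns indexed by those edges form a local block whose permanent expansion does not cancel, and then invokes the reduction lemma to factor $\per(B_G(\eta))$ as this nonzero local contribution against $\per(B_H(\eta_H)) \ne 0$ supplied by induction.

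The base cases are the structurally simple graphs. The classes already settled in this paper — subcubic graphs, $2$-trees, Halin graphs, and grids — serve as a template, and the finitely many small exceptional graphs are checked directly by writing down $\eta$ and evaluating the permanent. Here one uses that a graph with no isolated edge always has a vertex of degree at least $2$ sitting in a larger component, which guarantees enough edge-columns to fill out the $|E(G)| \times |E(G)|$ matrix while keeping each edge-column within its multiplicity budget.

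The hard part — and the reason this is stated as a conjecture rather than a theorem — is the combinatorial core: showing that \emph{every} graph with no isolated edge contains a reducible configuration to which the reduction lemma applies. The reduction controls cancellation in the permanent only when the part removed meets the rest of the graph in a sufficiently sparse, tree-like way; for a vertex of large degree embedded in a dense neighborhood the local block can itself have vanishing permanent, and no single configuration is known to be unavoidable across all graphs. This is sharply different from the $1$-$2$-$3$ setting, where probabilistic and discharging arguments yield the constant bound $5$: the permanent must be \emph{exactly} nonzero, so approximate or average-case counting gives no traction, and one needs an explicit cancellation-free certificate for arbitrary degree sequences. Removing this obstacle — either by enlarging the catalogue of reducible configurations until it becomes unavoidable, or by an algebraic estimate that keeps $\per(B_G(\eta))$ away from zero for dense graphs — is precisely the open difficulty, and is why the present paper establishes $\pind(A_G)=1$ only for the listed structured classes rather than the full conjecture.
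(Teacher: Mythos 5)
This statement is Conjecture~\ref{g0}: it is an \emph{open conjecture} quoted from \cite{BGN09}, and the paper contains no proof of it, so there is nothing to compare your argument against. More importantly, your proposal is not a proof either, and you concede as much in your final paragraph: the entire argument hinges on exhibiting an unavoidable catalogue of reducible configurations for arbitrary graphs without isolated edges, and you state explicitly that no such catalogue is known. A proof plan whose central step is ``this is precisely the open difficulty'' establishes nothing; at best it is a correct description of why the conjecture is hard.

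There is also a concrete technical obstruction to the route you sketch. The reduction machinery of this paper (Theorems~\ref{del}, \ref{del2}, \ref{del3} and Corollary~\ref{delete}) is built for $A_G$ and depends essentially on the availability of the \emph{vertex} columns: deleting a vertex $v$ rewrites columns via the relation $A_G(e)=A_G(u)+A_G(v)$ and then compensates by incrementing or decrementing $\eta$ \emph{on the neighbouring vertices} ($\eta'(u)=\eta(u)\pm 1$). For $B_G$ one must keep $\eta(v)=0$ on every vertex throughout, so the index weight freed or created by a deletion has nowhere to go except onto edge columns, and the identity (1) cannot be used in the same way. You would need a genuinely different reduction lemma that redistributes multiplicity only among edge columns while controlling cancellation in the permanent, and neither the paper nor your proposal supplies one. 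Your side remarks are also slightly off: $\eta\equiv 1$ on edges gives the square matrix $B_G$ itself, but its permanent can vanish, which is exactly why the multiplicity budget $2$ is needed; and the comparison with the $1$-$2$-$3$ bound of $5$ does not bear on the permanent question.
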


\begin{conjecture}\cite{WZ11}
\label{g1} For any graph $G$,  $\pind(A_G) =
1$.
\end{conjecture}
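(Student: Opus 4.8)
The plan is to establish the upper bound $\pind(A_G) \le 1$, the reverse inequality being immediate. By the discussion preceding the statement, $\pind(A_G) \le 1$ is equivalent to exhibiting a valid index function $\eta$ with $\eta(z) \in \{0,1\}$ for every $z \in V(G) \cup E(G)$ and $\per(A_G(\eta)) \ne 0$; concretely, one selects a set $W \subseteq V(G)$ and a set $F \subseteq E(G)$ with $|W| + |F| = |E(G)|$ so that the square submatrix of $A_G$ on these columns has nonzero permanent. A nonzero term of this permanent is a bijection from the edges of $G$ to $W \cup F$ sending each edge to an incident selected vertex or to an adjacent selected edge, and the difficulty is that the $\pm 1$ entries recording the orientation $D$ may make such terms cancel, so one cannot merely exhibit a single admissible bijection.

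The main device I would use is induction on $|V(G)|$ powered by a reduction lemma. Its aim is, given a suitable local configuration in $G$---in the simplest case a vertex $v$ of small degree---to write $\per(A_G(\eta))$ as a $\pm 1$-combination of permanents $\per(A_H(\eta'))$, where $H$ is an induced subgraph obtained by deleting $v$ together with its incident edges and $\eta'$ is the induced index function on $H$. The tool is the Laplace (cofactor) expansion of the permanent: expanding simultaneously along the rows indexed by the edges at $v$ and the columns indexed by $v$ and its incident edges isolates the contribution of the configuration, while the complementary block is exactly $A_{G-v}$ on its standard columns, since the orientation and the incidence pattern of every edge avoiding $v$ are inherited unchanged. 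The delicate bookkeeping is over the edge-columns at $v$: one must allocate $\eta$ on $\{v\} \cup E(v)$ so that the localised factor reduces to a single nonvanishing term, thereby reducing the nonvanishing of $\per(A_G(\eta))$ to that of $\per(A_{G-v}(\eta'))$, to which the induction hypothesis applies.

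The base cases are trivial graphs, for instance a single edge, whose vertex column is a $1 \times 1$ matrix $[\pm 1]$ of permanent $\pm 1$. The induction then settles any class every member of which contains a reducible configuration whose deletion keeps one inside the class, and this is precisely what works for subcubic graphs, $2$-trees, Halin graphs, and grids: each contains a vertex of degree at most three, a simplicial or recursively removable vertex, or a boundary layer that peels off cleanly, and in each case the induced subgraph after deletion stays in the same family.

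The hard part, and the reason the statement is phrased as a conjecture, is that a general graph need not contain any reducible configuration: once $G$ has large minimum degree and high connectivity, say a $d$-regular graph with $d \ge 4$ and no useful separation, there is no evident vertex or substructure whose removal both simplifies the permanent and keeps one within a controlled family. Overcoming this would require either a reduction valid for every graph regardless of its local structure, or a genuinely global argument controlling the sign cancellations in $\per(A_G(\eta))$ directly---for example an algebraic or inclusion--exclusion identity showing that the signed count of admissible bijections is nonzero for a well-chosen pair $W, F$. Proving such a global nonvanishing statement is where I expect the real obstacle to lie.
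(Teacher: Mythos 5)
The statement you were asked to prove is a conjecture, and the paper does not prove it either: it only verifies $\pind(A_G)=1$ for subcubic graphs, $2$-trees, Halin graphs, and grids. Your proposal correctly recognises this and, for the part that can be proved, takes essentially the same route as the paper: an induction on $|V(G)|$ driven by a vertex-deletion reduction lemma (the paper's Theorem~\ref{del}/Theorem~\ref{del2} and Corollary~\ref{delete}), applied to graph classes closed under the deletion. The one substantive difference is in how the reduction is implemented. You propose a Laplace expansion along the rows indexed by $E(v)$ and the columns at $v$; the paper instead starts from a nonsingular matrix for $G-v$, appends $d(v)$ copies of the column $A_G(v)$ (which are supported exactly on the rows of $E(v)$, so the permanent picks up a factor $d(v)!$), and then trades columns using the linear relation $A_G(e)=A_G(u)+A_G(v)$ together with multilinearity of the permanent, killing the unwanted terms because any matrix with $d(v)+1$ copies of $A_G(v)$ has permanent zero. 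This column-exchange argument is what lets the paper control precisely how the index budget $\eta$ is redistributed to the neighbours of $v$ (some neighbours gain one, some lose one), which is the bookkeeping your sketch flags as ``delicate'' but does not pin down; if you pursue your Laplace-expansion version you would need to verify that the local factor is a single nonvanishing term, which is not automatic and is exactly what the paper's relation $A_G(e)=A_G(u)+A_G(v)$ is engineered to handle. Your closing assessment of why the general conjecture resists this method --- no reducible configuration in highly connected graphs of large minimum degree --- is accurate and consistent with the paper leaving the statement as a conjecture.
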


The discussion above shows that Conjecture \ref{g0} implies that any graph without
isolated edges is $(1,3)$-choosable,  and
Conjecture \ref{g1} implies that every graph is $(2,2)$-choosable.

We say an index function $\eta$ is {\em  non-singular}  if there is
a valid index function  $\eta' \le \eta$ with $\per(A_G(\eta')) \ne
0$. In this paper, we are interested in non-singularity of index
functions $\eta$ for which $\eta(e)=1$ for every edge $e$ and $\eta(v)$ can be any non-negative integers for any every
vertex $v$. Assume
$\eta$ is such an index function of $G$. We delete a vertex $v$, and
construct an index function $\eta'$ for $G-v$ from the restriction
of $\eta$ to $G-v$ by doing the following modification: $\eta(v)$ of
the neighbours $u$ of $v$ have $\eta'(u)=\eta(u)+1$, and all the
other neighbours $u$ of $v$ (if any) have $\eta'(u) = \eta(u)-1$. We
prove that if $\eta'$ is a non-singular index function of $G-v$,
then $\eta$ is a non-singular index function of $G$. Applying  this
reduction method,  we prove that Conjecture \ref{g1} holds for
subcubic graphs, $2$-trees,  Halin graphs and grids.
 Consequently,  subcubic graphs,
$2$-trees, Halin graphs and grids  are   $(2, 2)$-choosable.

\section{Reduction to induced subgraphs}
\label{sec:reduction}

To study non-singularity of index functions of $G$,   we shall consider
matrices whose columns are linear combinations of columns of $A_G$.
Assume $A$ is a square matrix whose columns are linear combinations
of columns of $A_G$. Define an index function $\eta_A: V(G) \cup
E(G) \to \{0,1, \ldots, \}$ as follows:

For $z \in V(G) \cup E(G)$,  $\eta_A(z)$ is  the number of columns
of $A$ in which $A_G(z)$ appears  with nonzero coefficient.

It is known \cite{WZ11} that columns of $A_G$ are not linearly independent. In particular,
if $e=uv$ is an edge of $G$, then $$A_G(e) = A_G(u)+A_G(v) \eqno(1)$$
Thus a column of $A$ may have different ways to be expressed as
linear combinations of columns of $A_G$. So the index function
$\eta_A$ is not uniquely determined by $A$. Instead, it is
determined by the way we choose to express the columns of $A$ as
linear combinations of columns of $A_G$. For simplicity, we use the
notation $\eta_A$, however, whenever the function $\eta_A$ is used,
an explicit expression of the columns of $A$ as linear combinations
of columns of $A_G$ is given, and we refer to that specific
expression.

It is well-known (and follows easily from the definition) that
the permanent of a matrix is multi-linear on its column vectors and row vectors:
If a column   $C$ of  $A$ is
a linear combination of two columns vectors $C =\alpha C'+ \beta C''$,
 and $A'$ (respectively, $A''$) is obtained from $A$ by replacing the
column  $C$ with $C'$ (respectively,  with
$C''$), then
$${\rm per}(A) = \alpha\ {\rm per}(A') + \beta \ {\rm per}(A''). \eqno(2)$$
By using (2) repeatedly, one can find matrices $A_1, A_2, \ldots,
A_q$ and real numbers $a_1, a_2, \ldots, a_q$ such that
$$\per(A)  = \sum_{j=1}^q a_j \per(A_j)$$
where each $A_j$ is a square matrix consisting of columns of $A_G$,
with each column $A_G(z)$ appears at most $\eta(z)$ times. Thus if
$\per(A) \ne 0$, then one of the $\per(A_j) \ne 0$. Thus if $\per(A) \ne 0$,
then $\eta_A$ is a non-singular index function of $G$.

\begin{theorem}
\label{del} Suppose $G$ is a graph, $\eta$ is an index function of $G$ for which $\eta(e)=1$ for every edge $e$.
Let $v$ be a vertex of $G$.
Let $\eta'$ be obtained from the restriction of $\eta$ to $G-v$ by the following modification:
Choose $d_G(v)-\eta(v)$ neighbours $u$ of $v$ with $\eta(u) \ge 1$, and let $\eta'(u)=\eta(u)-1$.
For the other $\eta(v)$ neighbours $u$ of $v$, let $\eta'(u)=\eta(u)+1$.
If $\eta'$ is a non-singular index function of $G-v$, then $\eta$ is a non-singular index function of $G$.
\end{theorem}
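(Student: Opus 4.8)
The plan is to exhibit a single square matrix $A$ whose columns are columns of $A_G$ (under an explicit expression as linear combinations), with $\per(A)\neq 0$ and $\eta_A\le\eta$; by the multilinearity principle recorded just before the theorem, $\per(A)\neq 0$ makes $\eta_A$ non-singular, and since non-singularity is monotone under $\le$, the inequality $\eta_A\le\eta$ then yields that $\eta$ is non-singular. First I would invoke non-singularity of $\eta'$ to fix a valid $\eta''\le\eta'$ on $G-v$ with $\per(A_{G-v}(\eta''))\neq 0$, and write $A_0=A_{G-v}(\eta'')$. The rows of $A_G$ split into the edges of $E(G-v)$ and the $d:=d_G(v)$ edges $e_1,\dots,e_d$ at $v$; the structural facts I will use are relation (1), which gives $A_G(e_i)=A_G(u_i)+A_G(v)$, and its two consequences: the column $A_G(v)$ is supported only on the rows $e_1,\dots,e_d$, while $A_G(e_i)$ agrees with $A_G(u_i)$ on every row of $E(G-v)$ (so its restriction there equals $A_{G-v}(u_i)$).

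Next I would build $A$ of size $|E(G)|$ as follows. Its \emph{interior} columns are the columns of $A_0$, lifted to full columns of $A_G$; but for each plus-neighbour $u_i$ at which $\eta''(u_i)=\eta(u_i)+1$ I replace one copy of $A_G(u_i)$ by $A_G(e_i)$. By the agreement above this leaves the restriction to the rows $E(G-v)$ equal to $A_0$, while lowering the recorded multiplicity of $u_i$ to $\eta(u_i)$ and raising that of $e_i$ to $1$. The remaining $d$ \emph{boundary} columns are $\eta(v)$ copies of $A_G(v)$ together with one copy of $A_G(u_j)$ for each of the $d-\eta(v)$ minus-neighbours $u_j$. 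A column-by-column check then gives $\eta_A\le\eta$: the minus-neighbours have slack since $\eta''(u_j)\le\eta(u_j)-1$, so $\eta_A(u_j)=\eta''(u_j)+1\le\eta(u_j)$; the overflow at plus-neighbours is exactly absorbed by the trade $A_G(u_i)\rightsquigarrow A_G(e_i)$; and $\eta_A(v)=\eta(v)$, $\eta_A(e_i)\le 1$.

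Finally I would evaluate $\per(A)$. Because each $A_G(v)$ column is supported only on the rows $e_1,\dots,e_d$, every permutation of nonzero weight sends all $\eta(v)$ of these columns into distinct edge-rows; and each boundary column $A_G(u_j)$ is nonzero in the edge-rows only at $e_j$, so in the \emph{clean} configuration it is forced onto its own row $e_j$. Expanding the permanent along the $A_G(v)$ columns and applying multilinearity (2) with relation (1) to the boundary, this clean configuration places the $\eta(v)$ copies of $A_G(v)$ and the $d-\eta(v)$ minus-neighbour columns exactly on the rows $e_1,\dots,e_d$, leaving the interior minor equal to $A_0$, and contributes $\pm\,\eta(v)!\,\per(A_0)\neq 0$.

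The main obstacle is precisely the control of the remaining configurations. Since all edges at $v$ share the vertex $v$, the rows $e_1,\dots,e_d$ are pairwise ``adjacent'': each edge-row $e_i$ has nonzero entries not only in $A_G(v)$, in $A_G(u_i)$ and in the other edge-at-$v$ columns, but also in \emph{every} column indexed by an edge incident to a neighbour of $v$, so a priori many permutations contribute (for instance, a boundary column $A_G(u_j)$ could be matched to an interior row, changing the interior minor to $A_{G-v}(\mu)$ for some $\mu\not\le\eta'$). The heart of the argument will be to show, by repeatedly applying relation (1) and tracking the multiplicity of $A_G(v)$ (a column whose support of size $d$ forces any term using more than $d$ copies to vanish), that all such extra contributions either vanish or cancel, so that $\per(A)=\pm\,\eta(v)!\,\per(A_0)\neq 0$. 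Combining $\per(A)\neq 0$ with $\eta_A\le\eta$ then gives the non-singularity of $\eta$.
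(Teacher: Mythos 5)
There is a genuine gap, and it sits exactly where you flag it yourself: you never prove $\per(A)\neq 0$. Your construction diverges from the paper's at the minus-neighbours. You place actual columns $A_G(u_j)$ in the $d-\eta(v)$ boundary slots, and these columns are \emph{not} supported only on the rows $e_1,\dots,e_d$: each $A_G(u_j)$ also has nonzero entries in every interior row indexed by an edge of $G-v$ incident to $u_j$. So the permanent does not factor through the ``clean'' configuration; permutations sending $A_G(u_j)$ into interior rows contribute terms of the form $\per(A_{G-v}(\mu))$ for index functions $\mu$ unrelated to $\eta''$, and there is no reason these vanish or cancel. Declaring that ``the heart of the argument will be to show \dots that all such extra contributions either vanish or cancel'' leaves the only hard step of the theorem unproved, and the identity $\per(A)=\pm\,\eta(v)!\,\per(A_0)$ you are aiming for is not something the paper's machinery (or any easy support argument) delivers for this matrix.

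The paper avoids this problem by never putting $A_G(u_j)$ (or $A_G(e_j)$) into the boundary as a genuine matrix column. It fills all $d$ boundary slots with copies of $A_G(v)$; since that column is supported exactly on the rows $e_1,\dots,e_d$, the permanent factors cleanly as $d!\,\per(A_{G-v}(\eta''))\neq 0$. The interior replacement $A_G(v_i)\rightsquigarrow A_G(e_i)$ for the plus-neighbours (which you do essentially the same way) is then shown not to change the permanent, because any expansion term acquiring a $(d+1)$-st copy of $A_G(v)$ has $d+1$ columns supported on $d$ rows and hence permanent zero. The minus-neighbours are handled purely at the level of bookkeeping: $d-\eta(v)$ of the boundary copies of $A_G(v)$ are \emph{formally rewritten} as $A_G(e_j)-A_G(v_j)$ using relation (1). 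This does not change the matrix or its permanent at all; it only changes the chosen expression of those columns as linear combinations of columns of $A_G$, so that $\eta_{M'}$ charges $e_j$ and $v_j$ (where the hypothesis $\eta(v_j)\ge 1$ gives $\eta_{M'}(v_j)=\eta''(v_j)+1\le\eta(v_j)$) instead of $v$. That formal-rewriting device is the missing idea in your proposal; without it you are left with a permanent computation that the theorem's proof is specifically designed to sidestep.
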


Theorem \ref{del} follows from the following more general  statement.

\begin{theorem}
\label{del2} Suppose $G$ is a graph, $v$ is a vertex of $G$ and
$E(v)=\{e_1, e_2,\ldots, e_k\}$, with $e_i=vv_i$ for $i=1,2,\ldots,k$.
Assume $\eta$ is an index function of $G$. Here $\eta(e)$ can be any non-negative integer.
Choose a subset $J$  of $\{1,2,\ldots, k \}$ and integers
$1 \le k_i \le \min\{\eta(e_i), \eta(v_i)\}$  such that $\eta(v) + \sum_{i \in J}
k_i = k$.
Let $\eta'$ be the index function   of $G' = G- v$ which is equal to the restriction
of $\eta$ to $G-v$, except that
\begin{enumerate}
\item For $i \in J$,   $\eta'(v_i) =  \eta(v_i) - k_i$.
\item For $i \in \{1,2,\ldots, k\} \setminus J$,   $\eta'(v_i) = \eta(v_i)+ \eta(e_i)$.
\end{enumerate}
If $\eta'$ is a non-singular index function for $G'$, then $\eta$ is
a non-singular index function for $G$.
\end{theorem}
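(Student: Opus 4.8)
The plan is to produce a single square matrix $M$ of order $|E(G)|$ whose columns are linear combinations of columns of $A_G$, such that the associated index function satisfies $\eta_M \le \eta$ and $\per(M) \ne 0$. By the multilinearity identity (2) and the discussion preceding the theorem, $\per(M) \ne 0$ forces $\per(A_G(\xi)) \ne 0$ for some valid $\xi \le \eta_M \le \eta$, which is exactly non-singularity of $\eta$. So everything reduces to building $M$ and computing one permanent.

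To build $M$, I would start from a witness for $G'$: since $\eta'$ is non-singular there is a valid $\eta'' \le \eta'$ with $\per(A_{G'}(\eta'')) \ne 0$; write $M' = A_{G'}(\eta'')$. The guiding observation is that for $z \ne v$ the column $A_{G'}(z)$ is the restriction of $A_G(z)$ to the rows $E(G') = E(G) \setminus \{e_1,\dots,e_k\}$, and moreover $A_G(e_i)$ and $A_G(v_i)$ have the \emph{same} restriction $A_{G'}(v_i)$, because $A_G(e_i) = A_G(v) + A_G(v_i)$ by (1) and $A_G(v)$ is supported on the rows $e_1,\dots,e_k$. I would lift each column of $M'$ to a full column of $A_G$ on $E(G)$, choosing the lift to respect the budget $\eta$: for $i \in J$ lift the copies of $A_{G'}(v_i)$ to $A_G(v_i)$; for $i \notin J$ split the (at most $\eta(v_i)+\eta(e_i)$) copies of $A_{G'}(v_i)$ into $a_i \le \eta(v_i)$ copies lifted to $A_G(v_i)$ and $b_i \le \eta(e_i)$ copies lifted to $A_G(e_i)$; lift all remaining columns to the obvious $A_G(z)$. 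Then append $k$ columns to account for the deleted rows: $\eta(v)$ copies of $A_G(v)$ together with, for each $i \in J$, one copy of $A_G(v_i)$ and $k_i-1$ copies of $A_G(e_i)$. Since $\eta(v) + \sum_{i \in J} k_i = k$, the matrix $M$ is square of order $|E(G)|$, and a direct count — using $1 \le k_i \le \min\{\eta(e_i),\eta(v_i)\}$ and $\eta'' \le \eta'$ — verifies $\eta_M(z) \le \eta(z)$ for every $z$ (both inequalities $k_i \le \eta(v_i)$ and $k_i \le \eta(e_i)$ get used here).

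To evaluate $\per(M)$ I would apply the permanent analogue of Laplace expansion along the $k$ rows $R = \{e_1,\dots,e_k\}$,
$$\per(M) = \sum_{S} \per(M[R,S])\,\per(M[\bar R,\bar S]),$$
the sum ranging over $k$-subsets $S$ of the columns. The construction is arranged so that the diagonal choice, $S =$ the $k$ appended columns, contributes $\per(M[R,S])\cdot\per(M')$: the appended $A_G(v)$ columns vanish off $R$, so any term placing an $A_G(v)$ column in $\bar S$ has a zero column and dies, forcing all of them into $S$; the remaining appended columns meet the rows $e_i$ through the nonzero entries of $A_G(v_i)$; and the complementary block $M[\bar R,\bar S]$ is then exactly $M'$. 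The block $M[R,S]$ is a $(\pm1)$-matrix with a controlled pattern of zeros whose permanent one checks is nonzero, so the diagonal term is a nonzero multiple of $\per(M') \ne 0$.

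The hard part will be controlling the remaining terms of the Laplace sum. Because permanents are not invariant under column operations, I cannot simply clear the $R$-entries of the lifted columns that also live on $R$ — namely $A_G(v_i)$ for $i \notin J$, the $A_G(e_i)$ columns, and $A_G(f)$ for edges $f$ incident to some $v_i$ — and these produce genuinely competing terms. Here the lifts that only meet $R$ in row $e_i$ and restrict to $A_{G'}(v_i)$ (the $A_G(v_i)$ columns) are harmless, since swapping them against an appended $A_G(v_i)$ leaves the complementary block equal to $M'$; the real difficulty is that covering a row $e_i$ by an $A_G(f)$ column ($f = v_i w$) replaces an $A_{G'}(f)$ by $A_{G'}(v_i)$ in the complementary block, and small examples show that for a careless $\eta''$ such terms can exactly cancel the diagonal term and give $\per(M)=0$. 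The crux is therefore to show these contributions reassemble into a nonzero multiple of $\per(M')$: I would repeatedly use $A_G(e_i)=A_G(v)+A_G(v_i)$ and $A_G(f)=A_G(v_i)+A_G(w)$ through (2) to rewrite every column meeting $R$ in terms of $A_G(v)$, the clean columns $A_G(v_i)$, and columns supported off $R$, and — invoking the freedom in the definition of non-singularity to replace $\eta''$ by a witness of $G'$ avoiding the edges incident to the $v_i$ — argue that no net cancellation survives. Establishing that this reorganization is always available is the main obstacle.
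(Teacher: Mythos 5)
You set up the right framework---exhibit a square matrix $M$ whose columns are linear combinations of columns of $A_G$ with $\eta_M\le\eta$ and $\per(M)\ne 0$, then invoke the multilinear expansion (2)---but your construction of $M$ creates a permanent you cannot evaluate, and you say so yourself: ``establishing that this reorganization is always available is the main obstacle.'' That obstacle is a genuine gap, not a technicality. By appending, for $i\in J$, actual columns $A_G(v_i)$ and $A_G(e_i)$, you destroy the one feature that makes the Laplace expansion along the rows $E(v)$ collapse; the off-diagonal terms involving columns $A_G(f)$ for edges $f$ at the $v_i$ really can cancel the diagonal term, as you observe. Your proposed rescue---replacing $\eta''$ by a witness for $G'$ that avoids the edges incident to the $v_i$---is not available: non-singularity of $\eta'$ only guarantees the existence of \emph{some} valid $\eta''\le\eta'$ with $\per(A_{G'}(\eta''))\ne 0$, and nothing lets you prescribe its support.

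The idea you are missing is that $\eta_M$ is defined relative to a \emph{chosen expression} of the columns as linear combinations of columns of $A_G$, so the bookkeeping for $i\in J$ can be done without changing the matrix at all. The paper appends $k$ identical copies of $A_G(v)$ (all edges at $v$ oriented toward $v$). These $k$ columns are supported exactly on the $k$ rows $E(v)$, so every nonzero term of the permanent must match them bijectively to those rows, giving $\per(M)=k!\,\per(A_{G'}(\eta''))\ne 0$ with no Laplace analysis needed. Then, for $i\notin J$, it swaps $\min\{\eta(e_i),\eta''(v_i)\}$ copies of $A_G(v_i)$ for $A_G(e_i)$; writing $A_G(e_i)=A_G(v)+A_G(v_i)$ and expanding by (2), every correction term contains at least $k+1$ copies of $A_G(v)$ supported on only $k$ rows and hence has permanent $0$, so the permanent is unchanged. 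Finally, for $i\in J$, it merely \emph{re-expresses} $k_i$ of the appended $A_G(v)$ columns as $A_G(e_i)-A_G(v_i)$, which alters no entry of the matrix but transfers the index count from $v$ to $e_i$ and $v_i$, yielding $\eta_{M'}(e_i)=k_i\le\eta(e_i)$, $\eta_{M'}(v_i)=\eta''(v_i)+k_i\le\eta(v_i)$ and $\eta_{M'}(v)=k-\sum_{i\in J}k_i=\eta(v)$. Reworking your argument along these lines closes the gap.
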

\begin{proof}
Assume $\eta'$ is non-singular. Let $\eta'' \le  \eta'$ be a valid
index function with $\per(A_{G'}(\eta'')) \ne 0$.

Assume $|E(G)|=m$ and $|E(G')|=m' = m -k$. By viewing each vertex
and each edge of $G'$ as a vertex and an edge of $G$, $A_G(\eta'')$
is an $m \times m'$ matrix, consisting $m'$ columns of $A_G$. First
we extend $A_G(\eta'')$ into an $m \times m$ matrix $A$ by adding $k$
copies of the column $A_G(v)$. The added $k$ columns has $k$ rows
(the rows indexed by edges incident to $v$) that are all $1$'s
(with all these edges oriented towards $v$), and
all the other entries of these $k$ columns are $0$. Therefore
$\per(M) = \per(A_{G'}(\eta'')) k !$, and hence $\per(M) \ne 0$.

Starting from the matrix $M$, for each $i\in \{1,2,\ldots, k\}
\setminus J$, remove $\min\{\eta(e_i), \eta''(v_i)\}$ copies of the column $A_G(v_i)$ and
add $\min\{\eta(e_i), \eta''(v_i)\}$ copies of the column $A_G(e_i)$. Denote by $M'$ the
resulting matrix.

\begin{claim}
For the matrix $M'$ constructed above, we have $\per(M') = \per(M)$.
\end{claim}
\begin{proof}
Since by (1),
 $A_G(e_i) = A_G(v_i) + A_G(v)$, we re-write
$\min\{\eta(e_i), \eta''(v_i)\}$ copies of the column $A_G(e_i)$ of
$M'$ as  $A_G(v) + A_G(v_i)$. Then we expand the permanent using its
multilinear property (i.e. using (2) repeatedly), to obtain the
following equation:
$$\per(M')=\per(M)+ \sum_{M''} \per(M'')$$
where $M''$ are those matrices which contain at least $k+1$ copies
of the column $A_G(v)$. Since these $k+1$ columns has all $1$'s in
$k$ rows and $0$ in all other entries, we have $\per(M'') = 0$ for
all $M''$, and so $\per(M') = \per(M)$.
\end{proof}

For each $i \in J$, write $k_i$ copies of $A_G(v)$ in $M'$ as
$A_G(e_i)-A_G(v_i)$. Note that this step does not change the matrix,
since $A_G(v) = A_G(e_i)-A_G(v_i)$
(by (1)). Now each column of $M'$
is a linear combination of columns of $A_G$.

 We  shall show that, with the linear combination of columns of $M'$ given in the above paragraph,
$\eta_{M'}(z) \le \eta(z)$ for   $z \in V(G) \cup E(G)$.

If $z \notin \{e_i, v_i: i=1,2,\ldots, k\} \cup \{v\}$,
$\eta_{M'}(z) = \eta_{M}(z) \le \eta''(z) \le \eta'(z) = \eta(z)$.
If $i \in \{1,2,\ldots, k\} - J$, then $\eta_{M'}(e_i) = \min
\{\eta(e_i), \eta''(v_i)\} \le \eta(e_i)$, and  $
\eta_{M'}(v_i)=\eta_{M}(v_i) - \min \{\eta(e_i), \eta''(v_i)\} \le
\max\{0,\eta''(v_i)-\eta(e_i)\} \le \eta'(v_i)-\eta(e_i)=\eta(v_i)$.
If $i \in J$, then $\eta_{M'}(e_i) = k_i \leq \eta(e_i)$ and
$\eta_{M'}(v_i)=\eta''(v_i)+k_i \le \eta'(v_i)+k_i = \eta(v_i)$.
Finally, $\eta_{M'}(v)=k - \sum_{i \in J}k_i = \eta(v)$. As
$\per(M') \ne 0$, we conclude that $\eta$ is a non-singular index
function for $G$. This completes the proof of Theorem \ref{del2}.
\end{proof}

Theorem \ref{del} follows from Theorem \ref{del2} by choosing $k_i=1$  and $|J|=d(v)-\eta(v)$.
By definition, if $\eta''$ is non-singular and $\eta' \ge \eta''$, then $\eta'$ is also non-singular. So the following is
equivalent to Theorem \ref{del}.
\begin{theorem}
\label{del3} Suppose $G$ is a graph, $\eta$ is an index function of $G$ for which $\eta(e)=1$ for every edge $e$.
Let $v$ be a vertex of $G$.
Let $\eta'$ be obtained from the restriction of $\eta$ to $G-v$ by the following modification:
Choose at least $d_G(v)-\eta(v)$ neighbours $u$ of $v$ with $\eta(u) \ge 1$, and let $\eta'(u)=\eta(u)-1$.
For the other  neighbours $u$ of $v$, let $\eta'(u)=\eta(u)+1$.
If $\eta'$ is a non-singular index function of $G-v$, then $\eta$ is a non-singular index function of $G$.
\end{theorem}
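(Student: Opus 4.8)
The plan is to deduce Theorem \ref{del3} directly from Theorem \ref{del}, exploiting the \emph{monotonicity} of non-singularity under the pointwise order on index functions. First I would record the one fact that drives the whole argument: if $\zeta$ and $\zeta'$ are index functions of a graph $H$ with $\zeta \le \zeta'$ pointwise and $\zeta$ is non-singular, then $\zeta'$ is non-singular as well. This is immediate from the definition, since any valid index function $\zeta'' \le \zeta$ witnessing $\per(A_H(\zeta'')) \ne 0$ also satisfies $\zeta'' \le \zeta'$, and hence witnesses non-singularity of $\zeta'$.

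Now suppose we are in the situation of Theorem \ref{del3}. Let $N(v)$ denote the set of neighbours of $v$, let $D \subseteq N(v)$ be the decreased set (so every $u \in D$ has $\eta(u) \ge 1$ and $|D| \ge d_G(v) - \eta(v)$), and let $I = N(v) \setminus D$ be the increased set, producing the index function $\eta'$ on $G-v$. The idea is to compare $\eta'$ with the output of the construction in Theorem \ref{del}. I would choose a subset $D_0 \subseteq D$ with $|D_0| = d_G(v) - \eta(v)$ exactly; this is possible because $|D| \ge d_G(v) - \eta(v)$, and every vertex of $D$ (hence of $D_0$) satisfies the constraint $\eta(u) \ge 1$ required by Theorem \ref{del}. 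Applying the construction of Theorem \ref{del} with $D_0$ as the decreased set yields an index function $\eta^\ast$ on $G-v$, in which the remaining $\eta(v)$ neighbours $N(v) \setminus D_0 = I \cup (D \setminus D_0)$ are increased.

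The key comparison is $\eta' \le \eta^\ast$. The two functions agree on every edge and on every vertex of $G-v$ that is not a neighbour of $v$; on $D_0$ both decrease by $1$, and on $I$ both increase by $1$, so they agree there too. They differ only on $D \setminus D_0$, where $\eta'$ decreases the value to $\eta(u)-1$ while $\eta^\ast$ increases it to $\eta(u)+1$; thus $\eta'(u) = \eta^\ast(u) - 2$ on $D \setminus D_0$ and $\eta' = \eta^\ast$ elsewhere, giving $\eta' \le \eta^\ast$. By the monotonicity fact, the assumed non-singularity of $\eta'$ forces $\eta^\ast$ to be non-singular on $G-v$. Since $\eta^\ast$ is exactly the index function produced by Theorem \ref{del} from $\eta$, $v$, and the choice $D_0$, Theorem \ref{del} now yields that $\eta$ is non-singular for $G$.

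I expect no genuine obstacle here: the statement is a repackaging of Theorem \ref{del}, and the only points needing care are the bookkeeping in the comparison $\eta' \le \eta^\ast$ and the verification that $D_0$ can be selected inside $D$ subject to the constraint $\eta(u)\ge 1$, both of which are routine. The reverse implication, that Theorem \ref{del3} contains Theorem \ref{del}, is trivial, being the special case $|D| = d_G(v)-\eta(v)$, which confirms the claimed equivalence of the two formulations.
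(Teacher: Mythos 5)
Your proof is correct and is essentially the paper's own argument: the paper disposes of Theorem \ref{del3} with the one-line remark that non-singularity is monotone under the pointwise order (if $\eta''$ is non-singular and $\eta' \ge \eta''$ then $\eta'$ is non-singular), so the statement reduces to Theorem \ref{del}. You have simply spelled out the same comparison $\eta' \le \eta^\ast$ in detail, and the bookkeeping checks out.
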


We shall apply Theorem \ref{del3} repeatedly and delete a sequence of vertices in order.
We need to record which vertices are deleted, and when a vertex is deleted, for which
neighbours $u$ we have $\eta'(u)=\eta(u)+1$.
For this purpose, instead of really removing the deleted vertices,
we indicate the deletion of $v$ by orient all the edges
incident to $v$ from $v$ to its neighbours, and then choose a subset of these oriented edges (to indicate those neighbours $u$
for which $\eta'(u)=\eta(u)+1$).

The index function $\eta$ is changing in the process of the deletion.
For convenience, we denote by $\eta_i$ the index function after the deletion of the
$i$th vertex. In particular, $\eta_0=\eta$.

Assume a vertex $v$ is deleted in the $i$th step, for each neighbour
$u$ of $v$ (at the time $v$ is deleted), orient the edge as an arc
from $v$ to $u$. After a sequence of vertices are deleted, we obtain
a digraph $D$  formed by edges incident to the ``deleted" vertices.
Let $D'$ be the sub-digraph  of¡@ $D$ formed  by those arcs $(v, u)$
with $u$ be the neighbour of $v$ (at the time $v$ is deleted) and
for which we have  $\eta'(u) = \eta(u)+1$.

If $u$ is deleted in the $i$th step, then
 $d_{D'}^+(u) \le \eta_{i-1}(u)$. After the $i$th step, all edges incident to $u$ are oriented.
On the other hand, $d_{D'}^-(u)$ is  the number of indices $j <i$ for which $\eta_j(u)=\eta_{j-1}(u)+1$,
and  $d_D^-(u)-d_{D'}^-(u)$ is   the number of indices $j < i$  for which $\eta_j(u)=\eta_{j-1}(u)-1$.
Thus $d_{D'}^+(u) \le \eta(u)+d_{D'}^-(u)-(d_D^-(u)-d_{D'}^-(u))$.

If after the $i$th step, $u$ is not deleted, then $d_{D'}^+(u)=0$ and $\eta_i(u)=\eta(u)+d_{D'}^-(u)-(d_D^-(u)-d_{D'}^-(u)) \ge 0$.

The following corollary summarize the final effect of the repeated application of Theorem \ref{del}.

\begin{corollary}
\label{delete} Suppose $G$ is a graph, $\eta$ is  an index
function of $G$ with $\eta(e)=1$ for all edges $e$,  and $X$ is a subset of $V(G)$.
Let $G'=G-E[X]$ be obtained from $G$ by deleting
edges in $G[X]$. Let   $D$  be an acyclic orientation   of $G'$, in which
each vertex $v \in X$ is a sink.
Assume $D'$ is a sub-digraph of $D$ such that for all $v \in V(D)$,
$$\eta(v)+2d^-_{D'}(v) - d^-_D(v) \ge d^+_{D'}(v), \eqno(*)$$
 Let $\eta'$ be the index function defined as $\eta'(e)=1$ for every edge $e$ of $G[X]$ and
 $\eta'(v)=\eta(v)+ 2d_{D'}^-(v) - d_D^-(v)$
 for $v \in X$. If  $\eta'$ is a non-singular index function  for
$G[X]$, then $\eta$ is a non-singular index function for $G$.
\end{corollary}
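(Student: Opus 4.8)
The plan is to realize Corollary~\ref{delete} as the cumulative effect of deleting the vertices of $V(G)\setminus X$ one at a time and applying Theorem~\ref{del3} at each deletion. First I would fix a linear extension $v_1,\dots,v_t$ of $D$ restricted to $V(G)\setminus X$ and delete the vertices in this order, writing $\eta_0=\eta$ and letting $\eta_i$ denote the index function produced after deleting $v_i$. Since every vertex of $X$ is a sink of $D$, no edge of $G[X]$ is ever oriented, so after the $t$ deletions the surviving graph is exactly $G[X]$ with its edges, and the arcs generated by the deletions reproduce $D$. When $v_i$ is deleted I let its out-arcs lying in $D'$ name the neighbours whose value is raised by one and let the remaining out-neighbours have their value lowered by one, which is exactly the choice allowed by Theorem~\ref{del3}.

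The bookkeeping that matches this process with $(*)$ has essentially been carried out already before the statement. At the instant $v_i$ is deleted all of its $D$-in-arcs have been processed, so $\eta_{i-1}(v_i)=\eta(v_i)+2d^-_{D'}(v_i)-d^-_D(v_i)$, and the requirement of Theorem~\ref{del3} that at most $\eta_{i-1}(v_i)$ neighbours be raised becomes, after this substitution, precisely $(*)$ at $v_i$. Evaluating the same identity at a vertex $w\in X$, which is never deleted and has $d^+_{D'}(w)=0$, shows that its terminal value equals $\eta(w)+2d^-_{D'}(w)-d^-_D(w)=\eta'(w)$, while each edge of $G[X]$ keeps weight $1$; thus the terminal index function is exactly $\eta'$. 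Granting that every deletion is a legitimate application, non-singularity of $\eta'$ then propagates back along the chain $\eta_t=\eta',\eta_{t-1},\dots,\eta_0=\eta$ through Theorem~\ref{del3}, which is the desired conclusion.

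The crux is the one hypothesis of Theorem~\ref{del3} I have glossed over: each \emph{lowered} neighbour must currently have value at least $1$, that is, all the $\eta_i$ must remain non-negative. Inequality $(*)$ governs only the net in-flow and the number of raised arcs at each vertex; on its own it does not prevent a vertex from being emptied to $0$ before a decrementing arc reaches it, and at a source $v$ with $\eta(v)=0$ it can even force a decrement on a neighbour that has nothing to give, so the rigid $D$-order may stall. The clean way around this is to abandon the fixed order---Theorem~\ref{del3} leaves it free---and run the deletions in any order that keeps the running function non-negative; more robustly, one can bypass the sequential argument altogether by the single matrix construction from the proof of Theorem~\ref{del2}. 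Starting from a matrix $N$ with $\per(N)\ne 0$ witnessing $\eta'$ on $G[X]$, one adjoins to $N$, for each edge of $G'$, one column $A_G(z)$ indexed by an endpoint $z$ of that edge lying in $V(G)\setminus X$ (such an endpoint exists because the $X$-vertices are sinks), so that the adjoined columns vanish on the rows indexed by $E(G[X])$; using that $D$ is acyclic one checks, as in Theorem~\ref{del2}, that the enlarged $|E(G)|\times|E(G)|$ matrix has permanent equal to a nonzero multiple of $\per(N)$. One then rewrites the adjoined columns via $A_G(e)=A_G(u)+A_G(v)$ and multilinearity in the pattern dictated by $D'$, exactly as in the Claim inside Theorem~\ref{del2}, so that the terms carrying a surplus copy of a deleted-vertex column vanish and the permanent is preserved. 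This route never encounters a negative intermediate function and needs only the \emph{terminal} bound $\eta_M\le\eta$ on column multiplicities, which is once more exactly $(*)$; the real work then reduces to the arc-by-arc verification that each $A_G(z)$ is used at most $\eta(z)$ times.
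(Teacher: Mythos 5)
Your first two paragraphs reproduce, in substance, the paper's own argument: the paper proves Corollary~\ref{delete} by induction on $|V\setminus X|$, at each step deleting a source $v\notin X$ of $D$, raising the heads of the $D'$-out-arcs of $v$ and lowering the heads of its remaining out-arcs, verifying that $(*)$ is inherited by $H=D-v$, $H'=D'-v$ with the updated index function, and then invoking Theorem~\ref{del}. So the core of your proposal is exactly the intended proof, and your accounting of why the terminal function equals $\eta'$ on $X$ matches the paper's.

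The difficulty you isolate in your last paragraph is genuine, and it is worth noting that the paper's proof makes the very same silent assumption: applying Theorem~\ref{del} (equivalently, choosing $k_i=1\le\min\{\eta(e_i),\eta(v_i)\}$ in Theorem~\ref{del2}) requires each lowered neighbour to have current value at least $1$, and the displayed verification of $(**)$ never checks that $\eta''$ stays non-negative. However, neither of your proposed repairs closes this as written. Reordering the deletions cannot work in general under $(*)$ alone: take $C_4$ with vertices $u_1,a,u_2,b$ in cyclic order, $X=\{a,b\}$, $\eta\equiv1$ except $\eta(a)=\eta(b)=0$, all four edges oriented from $\{u_1,u_2\}$ to $\{a,b\}$, and $D'=\{(u_1,a),(u_2,b)\}$; then $(*)$ holds at every vertex and $\eta'$ is trivially non-singular for the edgeless $G[X]$, yet whichever of $u_1,u_2$ is deleted first must lower a neighbour of current value $0$. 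Your second repair, the one-shot matrix construction, is the right kind of fix but is only a sketch: adjoining $d_D^+(u)$ copies of $A_G(u)$ for each $u\in V\setminus X$ does force those columns onto the rows of $E(G')$, but showing that the resulting block has nonzero permanent is no longer the single-vertex computation $\per(M)=k!\,\per(A_{G'}(\eta''))$ of Theorem~\ref{del2}; it needs a separate argument exploiting acyclicity of $D$ (the signed contributions of different systems of representatives must not cancel), and the subsequent multilinear rewriting must be organized so that all surplus terms vanish simultaneously at several vertices. As it stands your proposal matches the paper's proof and honestly flags its weak point, but does not repair it; the pragmatic observation is that in every application in the paper the stronger condition --- non-negativity of all intermediate functions along the chosen topological order --- is easily checked directly, and that is what actually licenses the repeated use of Theorem~\ref{del}.
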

\begin{proof}
Assume $\eta'$ is non-singular for $G[X]$. We shall prove that $\eta$ is non-singular for $G$.
We prove this by induction on $|V-X|$. If $V-X = \emptyset$, then
$\eta =\eta'$ and there is nothing to prove.

Assume $V-X \ne \emptyset$. Since the orientation $D$ is acyclic,
there is a source vertex $v \notin X$. Let $e_1, e_2, \ldots, e_k$
be the set of edges incident to $v$ and $e_i=vv_i$.

Consider the graph $G-v$. Let $\eta''$ be the index function on
$G-v$ defined as $\eta'' =  \eta$ on $G-v$, except that for
$i=1,2,\ldots, k$, if $e_i \notin D'$,   then  $\eta''(v_i) =
\eta(v_i)-1$, and if $e_i \in D'$, then  $\eta''(v_i) = \eta(v_i)+
1$.

Let $H=D-v$ and $H'=D'-v$. We shall show that
$$\eta''(u)+ 2d^-_{H'}(u) - d^-_H(u) \ge d^+_{H'}(u) {\mbox { for all $u\in V(H)$}} \eqno(**)$$
If $u \notin \{v_1, v_2, \ldots, v_k\}$, then ($**$) is the same as ($*$).
If $u=v_i$ and $e_i\in D'$, then $\eta''(v_i)=\eta(v_i)+1, d_{H'}^-(v_i)=d_{D'}^-(v_i)-1, d_H^-(v_i)=d_D^-(v_i)-1$
and $d_H^+(v_i)=d_D^+(v_i)$. So ($**$) follows from ($*$).
If $u=v_i$ and $e_i\notin D'$, then $\eta''(v_i)=\eta(v_i)-1, d_{H'}^-(v_i)=d_{D'}^-(v_i), d_H^-(v_i)=d_D^-(v_i)-1$
and $d_H^+(v_i)=d_D^+(v_i)$. Again  ($**$) follows from ($*$).

Therefore, by induction hypothesis,  $\eta''$ is non-singular for $G-v$.
Apply Theorem \ref{del} to $\eta''$ and $\eta$,   with
  $J=\{i: 1 \le i \le k, e_i \notin
D'\}$ and $k_i=1$ for $i \in J$, we conclude  that $\eta$ is non-singular
for $G$.
\end{proof}

\section{Application of the reduction method}

\begin{lemma}
\label{subcubic} Suppose $G$ is a connected graph, and
$\eta$ is an index function with $\eta(e) =1 $ for all $e \in E(G)$.  Assume one of the following holds:
\begin{itemize}
\item  $\eta(v)\ge \max \{1, d_G(v)-2\}$ for every vertex $v$.
\item Each vertex $v$ has  $\eta(v) \ge d_G(v)-2$ and at least one vertex $v$ has $\eta(v)\ge d_G(v)$.
\end{itemize}
Then $\eta$ is a non-singular index function of $G$.
\end{lemma}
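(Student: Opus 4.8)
The plan is to prove Lemma \ref{subcubic} by induction on $|V(G)|$, peeling off one vertex at a time with the deletion step of Theorem \ref{del3} (whose repeated application is exactly what Corollary \ref{delete} packages) until only isolated vertices remain, for which the empty matrix has permanent $1$ and non-singularity is trivial. Since $A_G$ is block-diagonal over the components of $G$, its permanent factorises, so an index function is non-singular for a graph if and only if its restriction to each component is non-singular for that component; this lets me apply the inductive hypothesis componentwise even when a deletion disconnects the graph. The condition $\eta(e)=1$ on edges survives every deletion, so Theorem \ref{del3} stays applicable throughout. I would treat the \emph{second} hypothesis first, because it is self-sustaining under deletion, and then reduce the first hypothesis to it.

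For the second hypothesis, let $v_0$ be a special vertex, i.e. $\eta(v_0)\ge d_G(v_0)$. Because $d_G(v_0)-\eta(v_0)\le 0$, Theorem \ref{del3} lets me send \emph{every} neighbour up, setting $\eta'(u)=\eta(u)+1$ for each neighbour $u$ of $v_0$ and leaving all other values fixed. I then claim the second hypothesis regenerates on each component $C$ of $G-v_0$. As $G$ is connected, every such $C$ contains a neighbour $u$ of $v_0$; since $u$ loses only its edge to $v_0$ we have $d_C(u)=d_G(u)-1$, whence $\eta'(u)=\eta(u)+1\ge (d_G(u)-2)+1=d_C(u)$, so $u$ is special in $C$. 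A short case check over neighbours of $v_0$, non-neighbours, and using $\eta\ge d_G-2$ everywhere shows each $w\in C$ satisfies $\eta'(w)\ge d_C(w)-2$. Thus each $C$ is a smaller connected graph satisfying the second hypothesis, so by induction $\eta'$ is non-singular on each $C$, hence on $G-v_0$, and Theorem \ref{del3} yields non-singularity for $G$. The base case $|V(G)|=1$ is immediate, and a single edge is finished by its special endpoint.

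For the first hypothesis, if some vertex $v_0$ has $\eta(v_0)\ge d_G(v_0)$ then, since $\eta(v)\ge\max\{1,d_G(v)-2\}\ge d_G(v)-2$ everywhere, the second hypothesis already holds and we are done. Otherwise $\eta(v)\le d_G(v)-1$ for every $v$; combined with $\eta(v)\ge\max\{1,d_G(v)-2\}$ this forces $d_G(v)\ge 2$ for all $v$ (no leaves) and $\eta(v)\ge 1$ everywhere. Now I pick a non-cut vertex $v$ (one exists in any connected graph on at least two vertices), so $G-v$ is connected. The deletion forces exactly $d_G(v)-\eta(v)\in\{1,2\}$ neighbours down and leaves the remaining $\eta(v)\ge 1$ neighbours to go up, the down-moves being legal precisely because $\eta\ge 1$. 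One checks that every vertex of $G-v$ then has $\eta'\ge d_{G-v}-2$, while any up-neighbour $u$ satisfies $\eta'(u)=\eta(u)+1\ge d_G(u)-1=d_{G-v}(u)$ and is special. Hence $G-v$ satisfies the second hypothesis; applying the already-established case and then Theorem \ref{del3} gives non-singularity for $G$.

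The degree bookkeeping (verifying $\eta'\ge d-2$ for up-, down-, and non-neighbours) is routine from $\eta\ge d_G-2$. The genuine obstacle is the first hypothesis: it is \emph{not} preserved under deletion — for a cycle with $\eta\equiv 1$ no vertex is ever special, and removing a vertex drops an endpoint's value below the required $1$ — so one cannot simply induct within it. The key manoeuvre is to convert it into the self-sustaining second hypothesis by a single deletion that manufactures a special vertex (an up-neighbour), after which the second-hypothesis induction closes the argument. The remaining care points are guaranteeing that each component keeps a special vertex and that disconnection is absorbed by the multiplicativity of the permanent over blocks.
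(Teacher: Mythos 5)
Your proof is correct and takes essentially the same route as the paper's: a minimal-counterexample/induction argument that handles the second hypothesis by deleting the special vertex with every neighbour going up (each component of the remainder inheriting a special vertex), and handles the first hypothesis by deleting a non-cut vertex so that an up-neighbour becomes special, reducing to the second case. The only differences are cosmetic, e.g.\ the paper first normalizes $\eta(v)=\max\{1,d_G(v)-2\}$ and sends $d_G(v)-1$ neighbours down rather than exactly $d_G(v)-\eta(v)$.
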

\begin{proof}
Assume the lemma is not true and $G$ is a counterexample with
minimum number of vertices.

Assume first that $\eta(v) \ge \max \{1, d_G(v)-2\}$ for all $v$.
By reducing the value of $\eta$ if needed, we may assume that
 $\eta(v) = \max \{1, d_G(v)-2\}$ .
Let $v$ be a non-cut vertex of $G$ and let $v_1, \ldots, v_k$   be
the neighbours of $v$. Consider the graph $G-v$. Let $\eta'$ be the
index function of $G-v$ defined as $\eta'=\eta$, except that
$\eta'(v_i)=\eta(v_i)-1$ for $i=1,2,\ldots, k-1$ and
$\eta'(v_k)=\eta(v_k)+1$. For each $i \in \{1,2,\ldots, k-1\}$, we
have $\eta'(v_i) \ge d_{G-v}(v_i)-2$, and $\eta'(v_k) \ge
d_{G-v}(v_k)$. As $G-v$ is connected, the condition of the lemma is
satisfied by $G-v$ and $\eta'$. By the minimality of $G$,  $\eta'$
is a non-singular index function for $G-v$. By Theorem \ref{del},
 $\eta$ is a
non-singular index function for $G$.

Assume each vertex $u$ has  $\eta(u) \ge d_G(u)-2$ and  one vertex $v$ has $\eta(v)\ge d_G(v)$.
Let $\eta'$ be the index function of $G-v$ defined as
$\eta'=\eta$ except that $\eta'(u) = \eta(u)+1$  for all neighbours $u$ of $v$.
Note that for all the neighbours $u$ of $v$, $\eta'(u) \ge d_{G-v}(u)$.
Thus each component of $G-v$, together with  $\eta'$, satisfies the condition of the lemma. By the minimality of $G$,
  $\eta'$ is a non-singular index function for $G-v$.
Apply Theorem \ref{del} again, we conclude that $\eta$ is a
non-singular index function for $G$.
\end{proof}

A graph $G$ is called {\em subcubic} if $G$ has maximum degree at
most $3$.

\begin{corollary}
\label{cubic}
Conjecture \ref{g1} holds for subcubic graphs, i.e.,
if $G$ is a subcubic graph, then $\pind(A_G)=1$.
As a consequence, subcubic graphs are $(2,2)$-choosable.
\end{corollary}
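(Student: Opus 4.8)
The plan is to observe that the constant index function $\eta \equiv 1$ already satisfies the hypotheses of Lemma \ref{subcubic} for subcubic graphs, so that almost all the work has been done. Concretely, I would set $\eta(z) = 1$ for every $z \in V(G) \cup E(G)$; then $\eta(e) = 1$ for every edge, as required. Since $G$ is subcubic, every vertex $v$ satisfies $d_G(v) \le 3$, hence $d_G(v) - 2 \le 1$ and $\max\{1, d_G(v) - 2\} = 1 = \eta(v)$. Thus the first bullet of Lemma \ref{subcubic} holds. As that lemma is stated for connected graphs, I would apply it to each connected component of $G$ separately, using that $A_G$ is block-diagonal with respect to the components (an edge of one component meets only rows and columns indexed inside that component), so that permanents, and hence non-singularity, factor over components. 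Combining a witnessing index function on each component yields one on $G$, so $\eta \equiv 1$ is a non-singular index function of $G$.

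Next I would translate non-singularity into the permanent index. By definition of non-singularity there is a valid index function $\eta' \le \eta$ with $\per(A_G(\eta')) \ne 0$; since $\eta' \le \eta \equiv 1$ we have $\eta'(z) \in \{0,1\}$ for all $z$, so in the matrix $A_G(\eta')$ every column of $A_G$ occurs at most once. This is exactly the statement $\pind(A_G) \le 1$. For any $G$ with at least one edge one cannot build a nonzero square matrix using no columns, so $\pind(A_G) \ge 1$, whence $\pind(A_G) = 1$ (edgeless graphs are trivial and may be set aside). Finally, as recorded in the introduction, $\pind(A_G) = 1$ furnishes a valid index function $\eta'$ with $\eta'(z) \le 1$ and $\per(A_G(\eta')) \ne 0$, equivalently $c_{\eta'} \ne 0$; feeding this into Combinatorial Nullstellensatz with lists of size $\eta'(z) + 1 \le 2$ at every vertex and every edge shows $G$ is $(2,2)$-choosable. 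This establishes both assertions of the corollary.

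I do not expect a genuine obstacle here: the content lives entirely in Lemma \ref{subcubic}, and the role of the subcubic hypothesis is simply that $d_G(v) - 2 \le 1$, which is the precise threshold at which the uniform value $\eta(v) = 1$ meets the bound $\max\{1, d_G(v) - 2\}$. The only points requiring a little care are the passage from connected components to the whole graph (handled by block-diagonality of $A_G$) and the bookkeeping that a $0/1$-valued index function with nonzero permanent is the same data as a certificate for $\pind(A_G) \le 1$.
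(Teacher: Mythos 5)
Your proposal is correct and follows the paper's own route exactly: the paper's proof of this corollary is the one-line observation that $\eta\equiv 1$ satisfies the first hypothesis of Lemma \ref{subcubic} because $d_G(v)\le 3$ gives $\max\{1,d_G(v)-2\}=1$. The extra details you supply (handling components separately and unwinding non-singularity into $\pind(A_G)=1$ and then $(2,2)$-choosability) are accurate bookkeeping that the paper leaves implicit.
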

\begin{proof}
If $G$ has maximum degree at most $3$, then it follows from Lemma \ref{subcubic} that
$\eta(z)=1$ for all $z \in V(G) \cup E(G)$ is a non-singular index function.
\end{proof}

A graph $G$ is a {\em $2$-tree} if there is an acyclic orientation   of $G$ (also denoted by $G$)
such that the following hold:
(1) there are two adjacent vertices  $v_0, v_1$ with $d_G^+(v_i)=i$ ($i=0,1$).
(2)   every other vertex   $v$ has $d_G^+(v)=2$,
and  the two out-neighbours of $v$ are adjacent.
If  $N_G^+(v)=\{u,w\}$ and $(u,w)$ is an arc, then $v$ is called a {\em son} of the arc $e=(u,w)$.
For an acyclic oriented graph $G$, for $v \in V(G)$, let $\rho_G(v)$ be the length of the longest directed path
ending at $v$. So if $v$ is a source, then $\rho_G(v)=0$.

\begin{theorem}
\label{2-tree}
Let $G$ be a $2$-tree   and let
 $\eta$ be an index function of $G$. Assume   $\eta(z) \ge 1$ for all $z \in E(G) \cup V(G)$,
 except that possibly there is one  arc $(u,w)$ with $\rho_G(u) \le 1$,
 for which $\eta(w) \ge 0$ and  $\eta(u) \ge 2$.
Then $\eta$ is non-singular for $G$.
\end{theorem}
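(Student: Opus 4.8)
The plan is to prove the statement by induction on $|V(G)|$, using the deletion result Theorem~\ref{del3} as the single reduction step. First I would normalize $\eta$ to its hardest (smallest) admissible values: since non-singularity is monotone (it asks for the existence of a valid $\eta'\le\eta$ with nonzero permanent), I may assume $\eta(e)=1$ for every edge, $\eta(v)=1$ for every ordinary vertex, and, when the exceptional arc $(u,w)$ is present, $\eta(u)=2$ and $\eta(w)=0$. I would work throughout with the fixed acyclic $2$-tree orientation, in which every vertex other than $v_0,v_1$ is the apex (son) of the arc joining its two out-neighbours, arcs increase $\rho_G$, and the sources are exactly the degree-$2$ apices with $\rho_G=0$. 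The base case is the single edge $v_0v_1$ (and, for safety, the triangle), where a direct one-column permanent computation settles non-singularity. Each inductive step deletes a carefully chosen source $v$ and checks that the modified function $\eta'$ on $G-v$ again satisfies the induction hypothesis; Theorem~\ref{del3} then lifts non-singularity back to $G$.

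In the generic case ($\eta(v)\ge 1$ for all $v$, no exceptional arc) the key sub-lemma is that there is always a source $v^{*}$ whose tail out-neighbour $y$ (the out-neighbour with the arc $y\to y'$ pointing to the other out-neighbour $y'$) satisfies $\rho_G(y)=1$. I would prove this by choosing, among all pairs (source, out-neighbour), one minimizing $\rho_G$ of the out-neighbour and following a shortest in-chain: minimality forces the value to be $1$, and $\rho_G(y)=1$ together with $v^{*}\to y,\,v^{*}\to y'$ forces the arc to leave $y$. Deleting $v^{*}$, I decrease $y'$ and increase $y$: if $\eta(y')\ge 2$ the result stays generic, while if $\eta(y')=1$ it becomes $0$ and $y$ rises to $\eta(y)+1\ge 2$, creating a single exceptional arc $(y,y')$ with $\rho_{G-v^{*}}(y)\le\rho_G(y)=1$, which is again admissible.

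The substantive work is the case where the exceptional arc $(u,w)$ is present. If $u$ is a source I delete $u$: because $d_G(u)=2$ and $\eta(u)=2$, Theorem~\ref{del3} permits increasing \emph{both} out-neighbours, which raises $\eta(w)$ to $1$ and returns to the generic case. If $u$ is not a source then $\rho_G(u)=1$, so every in-neighbour of $u$ is a source and, by a short orientation argument, is the apex of one of the two arcs $u\to w$ or $u\to z$, where $N_G^{+}(u)=\{w,z\}$. If some in-neighbour $s$ is an apex of $(u,w)$, then deleting $s$ while decreasing $u$ and increasing $w$ repairs the zero and returns to the generic case.

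The genuine obstacle is the remaining sub-case: $u$ is not a source and \emph{no} in-neighbour of $u$ lies on $(u,w)$, so all of them are apices of $(u,z)$. Here one cannot simply slide the exceptional arc along the $w$–$z$ edge: the slide fails when that edge is oriented $w\to z$, and also when $(u,z)$ carries several apices, since then $\rho_{G-s}(z)$ remains $\ge 2$. To close the induction I would therefore strengthen the hypothesis to admit one further configuration, a \emph{double zero}: a vertex $u$ with $\rho_G(u)\le 1$ and $\eta(u)\ge 3$ whose two (adjacent) out-neighbours both equal $0$. Deleting an apex $s$ of $(u,z)$, decreasing $z$ to $0$ and increasing $\eta(u)$ to $3$, carries the special-arc configuration into this double-zero configuration. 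I then verify the enlarged class is closed under the same reductions: from a double zero, if $u$ is a source one deletes $u$ and lifts both out-neighbours to $1$ (generic); otherwise every in-neighbour of $u$ is again an apex of $(u,w)$ or $(u,z)$, and deleting one such apex while decreasing $u$ to $2$ lifts the corresponding zero, fixing one of the two zeros and returning to a single exceptional arc with $\eta(u)=2$. Since each step removes a vertex, the induction terminates at the base case, and specializing the strengthened statement to the two original configurations yields the theorem. I expect this last bookkeeping — isolating the double-zero invariant and checking it is preserved, together with the orientation lemma guaranteeing the right source exists — to be the main difficulty.
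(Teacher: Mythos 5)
Your proposal is correct, and its skeleton coincides with the paper's: induction on $|V(G)|$ with the same invariant (at most one exceptional arc $(u,w)$ with $\rho_G(u)\le 1$, $\eta(u)\ge 2$, $\eta(w)\ge 0$), the same generic-case reduction (delete a source that is a son of an arc $(y,y')$ with $\rho_G(y)=1$, raise the tail, lower the head), and the same moves when $u$ is a source or when the exceptional arc has a son. The divergence is exactly where you locate the difficulty: the sub-case where $(u,w)$ has no son and every in-neighbour of $u$ is a son of the other out-arc $(u,z)$. The paper resolves it by deleting \emph{two} vertices at once via Corollary~\ref{delete} --- either two sons of $(u,z)$ (leaving $\eta$ unchanged) or the unique son together with $u$ itself --- so the invariant never needs to be enlarged. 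You instead stay within single-vertex deletions via Theorem~\ref{del3} and strengthen the induction hypothesis with the ``double zero'' configuration ($\eta(u)\ge 3$, both out-neighbours at $0$); your two consecutive deletions (a son of $(u,z)$ raising $u$ to $3$, then either $u$ itself or a second son lowering $u$ back to $2$) compose to precisely the paper's two-vertex moves, so the bookkeeping checks out. What your route buys is that Corollary~\ref{delete} is never invoked and every step is a uniform application of one deletion lemma; what it costs is a larger invariant that must be verified closed under all reductions, which you do correctly --- in particular your orientation lemma (every in-neighbour of a vertex $u$ with $\rho_G(u)=1$ is a son of an arc \emph{leaving} $u$, since a son of an arc entering $u$ would force a source to have positive in-degree) is sound, and it is the same unproved existence fact the paper relies on when it picks an arc with a son and $\rho_G(u)=1$. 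Both arguments are complete; yours is slightly longer but self-contained within the single-deletion framework.
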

\begin{proof}
Assume the theorem is not true and $G$ is a counterexample with
minimum number of vertices. If the special arc $(u,w)$ specified in
the theorem does not exist, then let $e=(u,w)$ be an arc which has
at least one son, and with $\rho_G(u)=1$. Note that all the sons of
$e$ are sources. Let $v$ be a son of $(u,w)$ and let $\eta'$ be the
index function of $G'=G-v$ which is equal to $\eta$, except that
$\eta'(u)=\eta(u)+1 \ge 2$ and $\eta'(w)=\eta(w)-1 \ge 0$. Then $G'$
and $\eta'$  satisfying the condition of the theorem, with $e$ be
the special edge (note that $\rho_{G-v}(u) \le \rho_G(u) = 1$).
Hence $\eta'$ is non-singular for $G'$. It follows from Theorem
\ref{del} that $\eta$ is non-singular for $G$.

Assume the special arc  $e=(u,w)$ exists. If $u$ is a source, then
delete $u$, and let $\eta'$ be the index function of $G'=G-u$ which
is equal to $\eta$, except that    $\eta'(v)=\eta(v)+1 $ for
neighbours $v$ of $u$. Then $\eta'(v) \ge 1$ for each vertex of $G'$, hence
$G'$ and $\eta'$  satisfying the
condition of the theorem. So $\eta'$ is non-singular for $G'$, and
it follows from Theorem \ref{del} that $\eta$ is non-singular for
$G$.

If $u$ is not a source vertex and $e$ has  a son $v$, then $v$ is a
source vertex. We  delete $v$ and let $\eta'$ be the index function
of $G'=G-v$ which is equal to $\eta$, except that
$\eta'(u)=\eta(u)-1$ and $\eta'(w) = \eta(w)+1$. Then $G'$ and
$\eta'$  satisfying the condition of the theorem, and hence $\eta'$
is non-singular for $G'$. It follows from Theorem \ref{del} that
$\eta$ is non-singular for $G$.

If $u$ is not a source vertex and $e$ has no son, then there is an
arc $e'=(u,w')$  which has
a son $a$. Since $\rho_G(u) \le 1$, all the sons of $e'$ are
sources. If $e'$ has more than one son, say $a,b$ are both sons of
$e'$, then let $\eta'$ be the restriction of $\eta$ to $G-\{a,b\}$.
By the minimality of $G$, $\eta'$ is non-singular for $G-\{a,b\}$.
By Corollary \ref{delete} (with $D$ consists of the four arcs
incident to $a,b$ and $D'$ consists of arcs $au, bw'$),   $\eta$ is
non-singular for $G$. Assume $e'$ has only one son $a$. Let $\eta'$
be the restriction of $\eta$ to $G-\{a,u\}$, except that
$\eta'(w)=1$. By the minimality of $G$, $\eta'$ is non-singular for
$G-\{a,u\}$. By Corollary \ref{delete} (with $D$ consists of the
four arcs incident to $a,u$ and $D'$ consists of arcs $aw', uw$),
$\eta$ is non-singular for $G$.
\end{proof}

\begin{corollary}
Conjecture \ref{g1} holds for $2$-trees, i.e., if $G$ is a $2$-tree, then $\pind(A_G)=1$, and hence is $(2,2)$-choosable.
\end{corollary}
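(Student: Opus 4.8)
The plan is to follow the same route as Corollary \ref{cubic}, deriving the corollary from Theorem \ref{2-tree} applied to the all-ones index function. First I would take $\eta(z) = 1$ for every $z \in V(G) \cup E(G)$. This choice visibly satisfies the hypothesis of Theorem \ref{2-tree}: we have $\eta(z) \ge 1$ for all $z$, so we need not even invoke the permitted exceptional arc $(u,w)$. Hence Theorem \ref{2-tree} yields directly that this constant $\eta$ is a non-singular index function of $G$.

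Next I would translate non-singularity into a statement about the permanent index. By the definition of non-singular, there is a \emph{valid} index function $\eta' \le \eta$ with $\per(A_G(\eta')) \ne 0$. Since $\eta \equiv 1$, the inequality $\eta' \le \eta$ forces $\eta'(z) \in \{0,1\}$ for every $z$. Thus the square matrix $A_G(\eta')$ uses each column of $A_G$ at most once and has nonzero permanent, which is precisely the condition defining $\pind(A_G) \le 1$. As any witness of a nonzero permanent for a graph with at least one edge must use at least one column, we also have $\pind(A_G) \ge 1$, and therefore $\pind(A_G) = 1$, verifying Conjecture \ref{g1} for $2$-trees.

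Finally, the $(2,2)$-choosability is immediate from the reduction recalled in the introduction, where it is observed that $\pind(A_G) = 1$ implies $G$ is $(2,2)$-choosable. I do not expect any genuine obstacle in this corollary: all the substantive work has already been carried out in Theorem \ref{2-tree}, and what remains is only the observation---identical in spirit to the passage from Lemma \ref{subcubic} to Corollary \ref{cubic}---that the constant index function $1$ meets the hypotheses of that theorem.
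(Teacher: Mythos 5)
Your proof is correct and follows exactly the route the paper intends (the paper states this corollary without proof, as the immediate consequence of Theorem \ref{2-tree} applied to the constant index function $\eta\equiv 1$, in parallel with the passage from Lemma \ref{subcubic} to Corollary \ref{cubic}). Nothing is missing.
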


\begin{theorem}
\label{halin}
If  $T$ is a tree with leaves $v_1, v_2, \ldots, v_n$, and $G$ is obtained from $T$ by
adding edges $v_iv_{i+1}$ ($i=1,2,\ldots, n$, with $v_{n+1}=v_1$), then $\pind(A_G)=1$, and hence $G$ is $(2,2)$-choosable.
\end{theorem}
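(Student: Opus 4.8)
The plan is to apply Corollary \ref{delete} with $X$ taken to be the set of leaves $\{v_1,\dots,v_n\}$, which reduces the all-ones index function $\eta$ on $G$ to an index function on the induced cycle $G[X]=C_n$; that cycle can then be handled directly by Lemma \ref{subcubic}. First I would record the structural facts, assuming $n\ge 3$ (the only non-degenerate case for a Halin graph). Since the $v_i$ are leaves of $T$, no two of them are adjacent in $T$ unless $T=K_2$, so the only edges of $G$ joining two vertices of $X$ are the added cycle edges. Hence $G[X]=C_n$ and $G-E[X]=T$, and Corollary \ref{delete} is set up to peel off all the internal (non-$X$) vertices of $T$.

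Next I would fix the orientation $D$ and the subdigraph $D'$ demanded by the corollary. Choose an internal vertex $r$ of $T$ adjacent to some leaf $\ell$; such a vertex exists whenever $T\ne K_2$, because the unique neighbour of a leaf is forced to be internal once $T$ has more than one edge. Root $T$ at $r$ and let $D$ orient every tree edge from parent to child. Then $D$ is acyclic, and every leaf, having no children, is a sink, exactly as required. Let $D'$ consist of the single arc $(r,\ell)$. The verification of hypothesis $(*)$ is then a short case check: at $r$ we have $\eta(r)+2d^-_{D'}(r)-d^-_D(r)=1\ge 1=d^+_{D'}(r)$; at $\ell$ we have $1+2-1=2\ge 0$; and at every other vertex $u$ (leaf or internal) we have $d^-_{D'}(u)=d^+_{D'}(u)=0$ and $d^-_D(u)=1$, so $(*)$ reads $1-1=0\ge 0$, independently of how large the degree of $u$ is.

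Computing the induced index function on $C_n$, I obtain $\eta'(\ell)=2$, $\eta'(v_i)=0$ for the remaining leaves, and $\eta'(e)=1$ on every cycle edge. Since $C_n$ is connected and $2$-regular, the second bullet of Lemma \ref{subcubic} applies verbatim: every vertex satisfies $\eta'(v)\ge d_{C_n}(v)-2=0$, and the vertex $\ell$ satisfies $\eta'(\ell)=2\ge d_{C_n}(\ell)$. Hence $\eta'$ is non-singular for $C_n$, and Corollary \ref{delete} yields that the all-ones $\eta$ is non-singular for $G$, which is precisely the statement $\pind(A_G)=1$; the $(2,2)$-choosability then follows from the discussion in the introduction. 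I would also note in passing that this argument never uses the cyclic order of the leaves, so it proves the theorem as stated for an arbitrary tree with an arbitrary leaf-cycle, Halin graphs included.

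The one point that genuinely has to be right is that the reduction forces index value $0$ on all but one cycle vertex, so the first bullet of Lemma \ref{subcubic} (which demands $\eta\ge 1$ at every vertex) is unavailable; the whole argument hinges on the second bullet, which tolerates index value $0$ at degree-$2$ vertices provided a single vertex carries index value $\ge 2$. That surplus is produced by the lone arc $(r,\ell)\in D'$, which is why the root must be chosen adjacent to a leaf. This is not merely convenient but essentially necessary: trying to raise every cycle vertex to a positive value would require every leaf-edge to lie in $D'$, and then condition $(*)$ would fail at any internal vertex with three or more leaf children, since such a vertex would have $d^+_{D'}\ge 3$ while $2d^-_{D'}\le 2$. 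Routing all the slack to one leaf is the mechanism that makes the high-degree internal vertices harmless.
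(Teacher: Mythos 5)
Your proof is correct, but it follows a genuinely different route from the paper's. The paper applies Corollary \ref{delete} with $X=\{v_n\}$ a single leaf, so the reduction must peel off the entire graph: $D$ is an acyclic orientation of all of $G$ (tree edges rooted away from an internal vertex $u$, plus the cycle oriented $v_1\to v_2\to\cdots\to v_n$ and $v_1\to v_n$), and $D'$ is a directed root-to-$v_1$ path together with every cycle edge; verifying $(*)$ then takes six cases, and the base case is the trivially non-singular empty index function on one vertex. You instead take $X$ to be the whole leaf set, so $G-E[X]=T$, $D$ orients only the tree, and $D'$ is the single arc $(r,\ell)$; the $(*)$ check collapses to three easy cases, and the residual index function on $G[X]=C_n$ (value $2$ at $\ell$, value $0$ elsewhere, value $1$ on edges) is disposed of by the second bullet of Lemma \ref{subcubic}. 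Your version buys a shorter verification and reuses the already-established subcubic lemma, at the cost of not being self-contained within Corollary \ref{delete}; it also makes transparent, as you note, that the cyclic order of the leaves is irrelevant. Your closing observation about why the surplus must be routed to a single leaf adjacent to the root is accurate and is exactly the obstruction that forces the paper's more elaborate choice of $D'$ when one insists on reducing to a single vertex. Both arguments share the implicit assumption that $T$ has an internal vertex (equivalently $n\ge 3$), which is harmless for Halin graphs.
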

\begin{proof}
First we construct an acyclic orientation of $G$ as follows: We
choose a non-leaf vertex $u$ of $T$ as the root of $T$. Orient the
edges of the tree from father to son. Then orient the added edges
from $v_i$ to $v_{i+1}$ for $i=1,2,\ldots, n-1$, and orient the edge
$v_1v_n$ from $v_1$ to $v_n$. The resulting digraph is $D$. Now we
choose a sub-digraph $D'$ of $D$ as follows: $D'$ consists of a
directed path $P$ from the root vertex $u$ to $v_1$, and all the
edges $v_iv_{i+1}$ for $i=1,2,\ldots, n-1$, and the edge $v_1v_n$.
Let $\eta$ be the constant function $\eta \equiv 1$, let $X =
\{v_n\}$ and let $\eta'(v_n)=0$, which is an index function of
$G[X]$. Then $\eta'$ is a non-singular index function
of $G[X]$. To prove that $\pind(A_G)=1$, i.e., $\eta$ is a
non-singular index function of $G$, it suffices,
 by Corollary \ref{delete},
to show that for each vertex $v$,
$$ 1 +2d^-_{D'}(v) - d^-_D(v) \ge d^+_{D'}(v).$$

 This is a routine check.  Assume first that $v$ is not a leaf of $T$.
 \begin{enumerate}
 \item If $v$ is not on   path $P$, then $d_{D'}^-(v)= 0$, $d^-_D(v) = 1$ and $d_{D'}^+(v)= 0$. So $ 1
+2d^-_{D'}(v) - d^-_D(v)= 0 \geq d_{D'}^+(v)$.
 \item If $v$ is on $P$, but is not the root $u$, then $d_{D'}^-(v)= 1$, $d^-_D(v) =1 $ and $d_{D'}^+(v)=1 $. So $ 1
+2d^-_{D'}(v) - d^-_D(v)= 2 \geq d_{D'}^+(v)$.
\item If $v=u$, then $d_{D'}^-(v)=0 $, $d^-_D(v) =0 $ and $d_{D'}^+(v)=1 $. So $ 1
+2d^-_{D'}(v) - d^-_D(v)= 1 \geq d_{D'}^+(v)$.
\end{enumerate}
Next, consider the case that $v$ is a leaf of $T$.
\begin{enumerate}
 \item If $v=v_1$, then $d_{D'}^-(v)= 1$, $d^-_D(v) = 1$ and $d_{D'}^+(v)= 2$. So $ 1
+2d^-_{D'}(v) - d^-_D(v)= 2 \geq d_{D'}^+(v)$.
 \item If $v=v_i$, for  $1< i< n$, then $d_{D'}^-(v)=1 $, $d^-_D(v) = 2$ and $d_{D'}^+(v)= 1$. So $ 1
+2d^-_{D'}(v) - d^-_D(v)= 1 \geq d_{D'}^+(v)$.
 \item If $v=v_n$, then $d_{D'}^-(v)= 2$, $d^-_D(v) = 3$ and $d_{D'}^+(v)= 0$. So $ 1
+2d^-_{D'}(v) - d^-_D(v)= 2 \geq d_{D'}^+(v)$.
\end{enumerate}
\end{proof}

A {\em Halin graph} is a planar graph obtained by taking a plane tree (an embedding of a tree on the plane) without degree $2$ vertices
by adding a cycle connecting the leaves of the tree cyclically.

\begin{corollary}
\label{Ha}
Conjecture \ref{g1} holds for Halin graphs, i.e., if $G$ is a Halin graph, then $\pind(A_G)=1$, and hence is $(2,2)$-choosable.
\end{corollary}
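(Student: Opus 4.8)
The plan is to observe that every Halin graph is already a special case of the construction analysed in Theorem \ref{halin}, so that the corollary is an immediate specialization.

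First I would unwind the definition. A Halin graph $G$ is built from a plane tree $T$ with no degree-$2$ vertices by adjoining a cycle through the leaves of $T$ in the cyclic order in which they appear in the planar embedding. Labelling these leaves $v_1, v_2, \ldots, v_n$ in that cyclic order, the adjoined cycle is exactly the set of edges $v_iv_{i+1}$ for $i=1,2,\ldots,n$, with $v_{n+1}=v_1$.

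Next I would match this against Theorem \ref{halin}, whose hypothesis begins with an arbitrary tree $T$ with leaves $v_1, \ldots, v_n$ and forms $G$ by adding precisely the edges $v_iv_{i+1}$. The two constructions coincide; in fact Theorem \ref{halin} is strictly more general, since it imposes no condition on the degrees of the internal vertices of $T$ and permits the leaves to be joined in an arbitrary cyclic order, whereas a Halin graph forbids degree-$2$ vertices and fixes the order via the embedding. Thus every Halin graph is captured by Theorem \ref{halin} once we label its leaves in the planar cyclic order, and invoking that theorem yields $\pind(A_G)=1$; the implication from $\pind(A_G)=1$ to $(2,2)$-choosability recorded in the introduction then completes the proof.

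Because the argument is a direct specialization, I do not expect any substantive obstacle. The only point needing a word of care is the compatibility of orderings: one must confirm that the cyclic order of the leaves dictated by the planar embedding is a legitimate labelling for Theorem \ref{halin}. Since the proof of that theorem treats the labelling $v_1, \ldots, v_n$ as arbitrary and verifies the inequality $1+2d^-_{D'}(v)-d^-_D(v)\ge d^+_{D'}(v)$ from Corollary \ref{delete} purely from the vertex degrees, any cyclic order---in particular the planar one---is admissible, so nothing further is required.
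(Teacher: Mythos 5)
Your proposal is correct and matches the paper exactly: the paper states Corollary \ref{Ha} without further proof precisely because a Halin graph is a special case of the tree-plus-leaf-cycle construction handled by Theorem \ref{halin}, and your observation that the planar cyclic ordering of the leaves is an admissible labelling is the only (minor) point that needs checking.
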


A {\em grid} is the Cartesian product of two paths, $P_n \Box P_m$, with vertex set $$V=\{(i,j): 1 \le i \le n, 1 \le j \le m\}$$
and edge set $$E=\{(i,j)(i', j'): i=i',j'=j+1 \ {\rm or} \ i'=i+1, j'=j\}.$$

\begin{lemma}
\label{grids} Assume $m,n \ge 1$. Let  $\eta$ be an index function
of $P_n \Box P_m$, with $\eta(e)=1$ for edges $e$, and one of the
following holds:
\begin{enumerate}
\item[1] $\eta(v)=1$ for all vertices $v$.
\item[2] $\eta(v)=1$ for all vertices $v$, except that  $\eta(n,1)=0$,
and $\eta((n,j))=2$ for $2 \le j \le m$.
\end{enumerate}
Then $\eta$ is non-singular for $G$.
\end{lemma}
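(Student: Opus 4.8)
My plan is to prove the two statements simultaneously by induction on $n$, using the row-deletion reduction packaged in Corollary \ref{delete}. The base case is $n=1$ (and, symmetrically, $m=1$), where $G=P_n\Box P_m$ is just a path: in Case 1, $\eta$ is the constant function $1$, and in Case 2, $\eta$ satisfies $\eta(v)\ge d_G(v)-2$ at every vertex while carrying a vertex of weight $2\ge d_G(v)$, so both configurations are non-singular directly by Lemma \ref{subcubic}. For the inductive step I assume $n,m\ge 2$ together with both statements for $P_{n-1}\Box P_m$, and I peel off the last row $i=n$.

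Concretely I apply Corollary \ref{delete} with $X=\{(i,j):1\le i\le n-1\}$, so that $G[X]=P_{n-1}\Box P_m$ and $G'=G-E[X]$ consists exactly of the edges meeting row $n$, namely the horizontal path $(n,1)(n,2)\cdots(n,m)$ together with the vertical edges $(n,j)(n-1,j)$. I orient $D$ by sending each horizontal edge $(n,j)\to(n,j+1)$ and each vertical edge $(n,j)\to(n-1,j)$; this orientation is acyclic and makes every vertex of $X$ a sink. I then let $D'$ contain the vertical edges $(n,j)\to(n-1,j)$ for $j\ge 2$ but not the one for $j=1$; in Case 1, I additionally put all the horizontal edges of row $n$ into $D'$, while in Case 2, I use no horizontal edges at all.

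With this choice the verification splits into two parts. First, a vertex $(n-1,j)$ of the exposed row meets exactly one edge of $G'$, the vertical edge entering it, so $\eta'((n-1,j))=\eta((n-1,j))+2d^-_{D'}((n-1,j))-d^-_D((n-1,j))$ equals $0$ when $j=1$ (that edge omitted from $D'$) and $2$ when $j\ge 2$ (that edge in $D'$); every other vertex of $X$ meets no edge of $G'$ and keeps weight $1$. Hence $\eta'$ on $G[X]=P_{n-1}\Box P_m$ is precisely the Case 2 configuration, which is non-singular by the induction hypothesis. Second, I must check the inequality $(*)$ of Corollary \ref{delete} at each vertex of row $n$; this is a short finite check at the three vertex types — the corner $(n,1)$, an interior $(n,j)$ with $1<j<m$, and the corner $(n,m)$ — using $\eta\equiv 1$ in Case 1 and $\eta\in\{0,2\}$ in Case 2 together with the in- and out-degrees in $D$ and $D'$ fixed above.

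The conceptual point, and the step I expect to be harder to discover than to verify, is that Case 1 cannot reduce to Case 1: deleting a row forces every vertex of the newly exposed row to change weight by exactly $\pm 1$, so the constant-$1$ pattern cannot be preserved, and one is pushed into the $0,2,\dots,2$ pattern of Case 2. This is exactly why the two cases must be carried together, with Case 2 playing the role of a self-reproducing invariant that also absorbs Case 1. Once this is recognized, choosing $D'$ so that the vertical edges boost columns $2,\dots,m$ and drop column $1$, and inserting the row-$n$ horizontals only in Case 1 (where the interior weights $\eta=1$ leave no slack in $(*)$, in contrast to the $\eta=2$ available in Case 2), makes both the weight bookkeeping on $X$ and the inequality $(*)$ on row $n$ fall into place.
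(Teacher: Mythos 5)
Your proof is correct and follows essentially the same route as the paper: induct by peeling off row $n$, and carry the $0,2,\dots,2$ pattern of Case~2 as a second invariant precisely because (as you observe) every newly exposed vertex must change weight by $\pm 1$, so the constant-$1$ pattern cannot reproduce itself. The only differences are in packaging: you delete the whole row in a single application of Corollary~\ref{delete} with an explicit $D,D'$, where the paper deletes the row's vertices one at a time via Theorem~\ref{del}, and you send both cases to Case~2 whereas the paper sends Case~2 back to Case~1; your base case and your Case~2 bookkeeping are, if anything, spelled out more carefully than the paper's.
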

\begin{proof} We prove it by induction on the number of vertices of $G$. The
case   $n=1$ or $m=1$ is easy and omitted. Assume $n,m \ge 2$. If
$\eta(v)=1$ for all vertices $v$, then we
delete vertices $(n,1), (n,2), \ldots, (n,m)$ in this order. When
deleting $(n,1)$, we increase $\eta(n,2)$ by $1$ and decrease
$\eta(n-1,1)$ by $1$. When deleting $(n,j)$ for $j \ge 2$, we
increase $\eta(n,j+1)$ by $1$ and increase $\eta(n-1,j)$ by $1$.
After all the vertices  $(n,1), (n,2), \ldots, (n,m)$ are deleted, we obtain a grid
$P_{n-1} \Box P_m$ and an index function $\eta'$  which satisfies the condition of the lemma and hence
is non-singular.  By Theorem \ref{del}, $\eta$ is non-singular.

Assume $\eta(n,1)=0$ and
$\eta(n,j)=2$ for $2 \le j \le m$.  We delete vertices $(n,m),
(n,m-1), \ldots, (n,1)$ in this order, and need not to change $\eta$
except for while deleting $(n, 2)$, we increase $\eta(n,1)$ by $1$.
It follows from induction hypothesis that the resulting index
function is non-singular for $P_{n-1} \Box P_{m}$, and by Theorem
\ref{del} that the original index function $\eta$ is non-singular
for $G$.
\end{proof}

\begin{corollary}
\label{Grids} Conjecture \ref{g1} holds for grids, and hence grids
are $(2,2)$-choosable.
\end{corollary}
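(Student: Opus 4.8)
The plan is to read the corollary off directly from Lemma~\ref{grids}, together with the correspondence between permanent index, non-singular index functions, and $(2,2)$-choosability that is set up in the introduction. No new combinatorial argument is needed here: all of the inductive work has already been carried out in the lemma, so the only remaining task is to unwind the definitions, exactly as was done for subcubic graphs in Corollary~\ref{cubic}.

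First I would fix an arbitrary grid $G = P_n \Box P_m$ and take the index function $\eta$ with $\eta(z) = 1$ for every $z \in V(G) \cup E(G)$. This is precisely case~1 of Lemma~\ref{grids}, so $\eta$ is non-singular for $G$. By the definition of non-singularity there is a \emph{valid} index function $\eta' \le \eta$ with $\per(A_G(\eta')) \ne 0$. Since $\eta' \le \eta \equiv 1$, we have $\eta'(z) \le 1$ for every $z$, so in the matrix $A_G(\eta')$ each column $A_G(z)$ of $A_G$ occurs at most once; and since $\eta'$ is valid, $A_G(\eta')$ is a square matrix with $|E(G)|$ columns whose permanent is nonzero. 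Thus each column of $A_G$ can be made to appear at most once in a square matrix of nonzero permanent, which by the definition of permanent index gives $\pind(A_G) = 1$, i.e.\ Conjecture~\ref{g1} for grids.

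For the choosability statement I would invoke the chain of implications recorded in the introduction. The valid index function $\eta'$ above satisfies $\eta'(v) \le 1 = 2-1$ for every vertex $v$ and $\eta'(e) \le 1 = 2-1$ for every edge $e$, and its associated coefficient satisfies $c_{\eta'} = \per(A_G(\eta')) \ne 0$. Combinatorial Nullstellensatz then produces, for any $(2,2)$-list assignment $L$, weights $\phi(z) \in L(z)$ with $P_G(\phi) \ne 0$, i.e.\ a proper $L$-total weighting. Hence $G$ is $(2,2)$-choosable; this is the same final step already used for subcubic graphs, $2$-trees, and Halin graphs.

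The only point deserving care is that the corollary needs only case~1 of Lemma~\ref{grids}; case~2 is the strengthened boundary hypothesis (with $\eta(n,1)=0$ and $\eta(n,j)=2$) that the induction in the lemma feeds on, and it is not invoked directly here. Consequently there is no genuine obstacle at the level of the corollary itself: the substance lies entirely in the lemma's induction on the number of vertices, where one must maintain one of the two admissible boundary regimes through the successive deletions of the vertices $(n,1),\dots,(n,m)$ governed by Theorem~\ref{del}, so that the reduced grid $P_{n-1}\Box P_m$ again falls under the same hypotheses.
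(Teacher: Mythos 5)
Your proposal is correct and follows exactly the route the paper intends: the corollary is an immediate consequence of case~1 of Lemma~\ref{grids} (take $\eta\equiv 1$, extract a valid $\eta'\le\eta$ with $\per(A_G(\eta'))\ne 0$, conclude $\pind(A_G)=1$, then invoke the Combinatorial Nullstellensatz chain from the introduction), which is why the paper states it without a separate proof. Your unwinding of the definitions is accurate, and you are right that case~2 of the lemma is only needed internally for the induction, not for the corollary itself.
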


\end{document}